\newtheorem{theorem}{Theorem}
\newtheorem{cor}{Corollary}
\newtheorem{lemma}{Lemma}
\newtheorem{prop}{Proposition}
\begin{document}
\title{Length spectra and strata of flat metrics}
\date{}
\author{Ser-Wei Fu}

\begin{abstract}
In this paper we consider strata of flat metrics coming from quadratic differentials (semi-translation structures) on surfaces of finite type. We provide a necessary and sufficient condition for a set of simple closed curves to be spectrally rigid over a stratum with enough complexity, extending a result of Duchin-Leininger-Rafi. Specifically, for any stratum with more zeroes than the genus, the $\Sigma$-length-spectrum of a set of simple closed curves $\Sigma$ determines the flat metric in the stratum if and only if $\Sigma$ is dense in the projective measured foliation space. We also prove that flat metrics in any stratum are locally determined by the $\Sigma$-length-spectrum of a finite set of closed curves $\Sigma$.
\end{abstract}

\maketitle


\section{Introduction}\label{intro}

In their recent work \cite{DLR}, Duchin, Leininger, and Rafi studied the length spectral rigidity problem for the space of flat metrics coming from quadratic differentials on surfaces. They prove that a set $\Sigma$ of simple closed curves is spectrally rigid over $\mathsf{Flat}(S)$ if and only if $\overline{\Sigma}=\mathcal{PMF}(S)$; see Sections~2--4 for definitions.

In this paper, we have extended the results of \cite{DLR} to strata of flat metrics. Let $S$ be a surface of finite type with genus $g$ and $n$ marked points that satisfies $(3g+n-3)\geq 2$ throughout the paper. We prove that a set $\Sigma$ of simple closed curves is spectrally rigid over a stratum $\mathsf{Flat}(S,\alpha)$ with sufficiently high dimension if and only if $\overline{\Sigma} = \mathcal{PMF}(S)$. To state this more precisely, recall that a stratum is determined by $\alpha = (\alpha_1, \dots, \alpha_{k} ; \varepsilon)$, where $k=k_0+n$, $k_0$ is the number of zeroes at non-marked points, $\alpha_1, \ldots, \alpha_{k_0}$ are the order of the zeroes, $\alpha_{k_0+1}, \ldots, \alpha_{k}$ are the orders at the marked points, and $\varepsilon = \pm 1$ is the holonomy.

\begin{theorem}
\label{theorem:main1}
Let $\alpha = (\alpha_1, \dots, \alpha_{k} ; \varepsilon)$ and $(2k_0 - 2g + \varepsilon + 1) > 0$. Then a set of simple closed curves $\Sigma \subset \mathcal{S}(S)$ is spectrally rigid over $\mathsf{Flat}(S,\alpha)$ if and only if $\overline{\Sigma} = \mathcal{PMF}(S)$.
\end{theorem}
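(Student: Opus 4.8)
The plan is to prove the two implications by quite different routes. The forward implication, $\overline{\Sigma}=\mathcal{PMF}(S)\Rightarrow\Sigma$ spectrally rigid over $\mathsf{Flat}(S,\alpha)$, is immediate from the theorem of Duchin--Leininger--Rafi \cite{DLR}: since $\mathsf{Flat}(S,\alpha)\subseteq\mathsf{Flat}(S)$, any two metrics in the stratum sharing a $\Sigma$-length-spectrum are a fortiori two metrics in $\mathsf{Flat}(S)$ with the same $\Sigma$-length-spectrum, hence coincide. It is worth recording that this direction uses nothing about the hypothesis $2k_0-2g+\varepsilon+1>0$; rigidity over the ambient space is inherited by every subset.

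All the content lies in the reverse implication, which I would prove in contrapositive form: assuming $\overline{\Sigma}\neq\mathcal{PMF}(S)$, produce distinct $q,q'\in\mathsf{Flat}(S,\alpha)$ with $\ell_q|_\Sigma=\ell_{q'}|_\Sigma$. Fix a nonempty open set $U\subset\mathcal{PMF}(S)$ disjoint from the compact set $\overline{\Sigma}$, and a base metric $q_0$ in the stratum chosen generically, so that the geodesic representatives of the relevant curves avoid degenerate configurations through the cone points and $q\mapsto\ell_q(\gamma)$ is real-analytic near $q_0$. The first step is the infinitesimal calculation, carried out in period coordinates for the stratum --- relative periods in $H^1(S,Z;\mathbb{C})$ ($Z$ the zero set) when $\varepsilon=+1$, and in the anti-invariant part of the relative cohomology of the orientation double cover when $\varepsilon=-1$. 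If the $q_0$-geodesic of $\gamma$ is the concatenation of saddle connections $c_1,\dots,c_m$ in directions $\theta_1,\dots,\theta_m$, then differentiating $\ell_q(\gamma)=\sum_j|\mathrm{hol}(c_j)|$ along a tangent direction $v$ to the stratum gives
\[
D_v\,\ell_{q_0}(\gamma)=\mathrm{Re}\,\Big\langle v,\ {\textstyle\sum_j e^{-i\theta_j}[c_j]}\Big\rangle,
\]
so the derivative depends on $\gamma$ only through the ``flat current'' $\widehat{\gamma}_{q_0}:=\sum_j e^{-i\theta_j}[c_j]$ assembled from its saddle-connection decomposition. Consequently a tangent direction $v$ is simultaneously tangent to all the level sets $\{\ell_\bullet(\gamma)=\mathrm{const}\}$, $\gamma\in\overline{\Sigma}$, exactly when $v$ lies in the real annihilator, for the $\mathrm{Re}$-pairing, of the span of $\{\widehat{\gamma}_{q_0}:\gamma\in\overline{\Sigma}\}$ inside the finite-dimensional tangent space of $\mathsf{Flat}(S,\alpha)$.

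The heart of the matter --- and the step I expect to be the main obstacle, being the one genuinely beyond \cite{DLR} --- is the dimension count showing this annihilator is nonzero, of real dimension at least $2k_0-2g+\varepsilon+1$. Because $\overline{\Sigma}$ misses the open set $U$, the currents $\widehat{\gamma}_{q_0}$, $\gamma\in\overline{\Sigma}$, fail to detect the foliations parametrized by $U$; the content of \cite{DLR} is that such a restricted family cannot span the whole tangent space of $\mathsf{Flat}(S)$, and the task here is to rerun the computation with the stratum constraints imposed --- fixing the orders of the zeroes and the holonomy replaces the tangent space of $\mathsf{Flat}(S)$ by the smaller relative cohomology group above --- while still bounding the codimension of the span from below by $2k_0-2g+\varepsilon+1$. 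I would do this by separating the ``absolute'' and ``relative'' parts of the period coordinates: the absolute periods may well be pinned down by $\overline{\Sigma}$, but the relative directions coming from moving the $k_0$ zeroes away from the marked points contribute a residual deformation space whose dimension is governed by $2k_0-2g+\varepsilon+1$, so that ``more zeroes than the genus'' is precisely the inequality keeping it positive. Geometrically the surviving $v$ should be realized by a modification of $q_0$ supported near a subsurface filled by foliations from $U$, and checking that such a subsurface can be taken to carry enough of the extra cone points for the modification to respect the stratum is where I expect the real work to be.

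It remains to pass from the infinitesimal statement to an honest pair $q\neq q'$. Since the $\ell_\gamma$ are real-analytic near the generic $q_0$ and the constraints factor through the finite-dimensional span of the $\widehat{\gamma}_{q_0}$, I would pick finitely many $\gamma_1,\dots,\gamma_r\in\overline{\Sigma}$ whose flat currents already span those of all of $\overline{\Sigma}$; near $q_0$ the map $q\mapsto(\ell_q(\gamma_i))_{i=1}^r$ then has constant rank strictly less than $\dim\mathsf{Flat}(S,\alpha)$, so by the constant-rank theorem its fiber through $q_0$ is a positive-dimensional submanifold. Since the flat currents of $\gamma_1,\dots,\gamma_r$ continue to span those of $\overline{\Sigma}$ throughout a neighborhood of $q_0$, every $\ell_\gamma$ with $\gamma\in\overline{\Sigma}\supseteq\Sigma$ is constant along that fiber, so any $q'\neq q_0$ on it together with $q_0$ exhibits the desired failure of spectral rigidity, completing the contrapositive.
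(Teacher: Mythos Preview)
Your overall architecture matches the paper's: the forward direction is inherited verbatim from \cite{DLR}, and the reverse is argued in contrapositive by producing a positive-dimensional family in the stratum with constant $\Sigma$-length-spectrum. The divergence, and the genuine gap in your proposal, is precisely at the step you flag as the main obstacle: the dimension bound on the span of the flat currents $\widehat{\gamma}_{q_0}$.

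Your proposed mechanism for that bound---separating absolute from relative periods and arguing that $\overline{\Sigma}$ can only constrain the absolute part, leaving a residual space of relative deformations of dimension $2k_0-2g+\varepsilon+1$---does not work as stated. The flat currents $\widehat{\gamma}_{q_0}=\sum_j e^{-i\theta_j}[c_j]$ are built from saddle connections, which are genuinely relative classes: a saddle connection joining two distinct cone points has nonzero image in the relative part of $H_1(S,C;\mathbb C)$, so there is no reason the span of the $\widehat{\gamma}_{q_0}$ should sit inside the absolute cohomology. Moreover, the numerology of your heuristic is off: the relative directions from moving the $k_0$ unmarked zeroes account for roughly $2k_0$ real parameters, not $2k_0-2g+\varepsilon+1$; the latter number is not the dimension of any natural ``relative'' summand but rather the difference $\dim\mathsf{Flat}(S,\alpha)-\dim\mathcal{MF}(S)=(4g+2k+\varepsilon-5)-(6g+2n-6)$.

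That arithmetic points to what the paper actually does. Rather than analyze the derivatives $\widehat{\gamma}_{q_0}$ directly, the paper uses the integral formula $\ell(\gamma,p(q))=\tfrac12\int_0^\pi i(\nu_q(\theta),\gamma)\,d\theta$ (Lemma~\ref{ellq}) together with a pair of efficiently intersecting maximal train tracks $\tau,\tau'$. After applying mapping classes, one arranges that every $\gamma\in\Sigma$ is carried by $\tau'$ and every foliation $\nu_q(\theta)$ (for $q$ in a small open set $U$) is carried by $\tau$. Bilinearity of the intersection pairing on $W_{\tau'}\times W_\tau$ then lets one pull the integral inside and obtain $\ell(\gamma,p(q))=i(\gamma,f(q))$ for a single smooth map $f:U\to W_\tau$. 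Since $\dim W_\tau=\dim\mathcal{MF}(S)=6g+2n-6$, the entire $\Sigma$-length map factors through a $(6g+2n-6)$-dimensional target on all of $U$, not just infinitesimally; the generic fiber of $f$ therefore has dimension at least $(4g+2k+\varepsilon-3)-(6g+2n-6)$, and projecting to $\mathsf{Flat}(S,\alpha)$ gives the deformation family of dimension $\geq 2k_0-2g+\varepsilon+1$. This global factorization also sidesteps the issue in your final paragraph about whether the span of the $\widehat{\gamma}_{q_0}$ stays constant (and whether the rank is constant, which the constant-rank theorem needs) as $q$ varies: here there is nothing to check, because the factorization through $W_\tau$ holds identically on $U$.
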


The key technical result needed for this is a new, more flexible construction of deformation families of constant $\Sigma$-length-spectrum. In \cite{DLR}, the authors used a constructive case-by-case proof to show the existence of such deformation families. For closed surfaces of genus $g$, they constructed deformation families of dimension $(2g-3)$. Ours is an existence proof which allows for a unified treatment for all surfaces, and all applicable strata. One direction of Theorem~\ref{theorem:main1} follows from \cite{DLR}*{Theorem~2}. The other direction comes as the corollary of the following theorem.

\begin{theorem}
\label{theorem:main11}
Let $\alpha = (\alpha_1,\dots,\alpha_{k} ; \varepsilon)$ and $(2k_0 - 2g + \varepsilon + 1) > 0$, and suppose $\Sigma$ is a set of simple closed curves with $\overline{\Sigma} \neq \mathcal{PMF}(S)$. Then there exists a deformation family $\Omega_\Sigma \subset \mathsf{Flat}(S,\alpha)$ such that the length function $\lambda_\Sigma$ is constant on $\Omega_\Sigma$ and $\dim(\Omega_\Sigma) \geq (2k_0 - 2g + \varepsilon + 1)$. Consequently, there exists a deformation family $\Omega_\Sigma \subset \mathsf{Flat}(S)$ of dimension at least $(6g + 2n - 8)$.
\end{theorem}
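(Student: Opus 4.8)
The plan is to prove Theorem~\ref{theorem:main11} by constructing a single well-chosen flat metric $q_0 \in \mathsf{Flat}(S,\alpha)$ together with an explicit family $\Omega_\Sigma$ of deformations of it, all inside the stratum. The starting point is the hypothesis $\overline{\Sigma} \neq \mathcal{PMF}(S)$: since $\overline{\Sigma}$ is compact, its complement contains an open set $U$ at positive distance from $\overline{\Sigma}$, and I would select inside $U$ a minimal, filling measured foliation $\mathcal{F}$, taking advantage of the openness of $U$ to also arrange that the prong (singularity) type of $\mathcal{F}$ is compatible with $\alpha$. The one consequence of non-density that I want to carry forward is a uniform transversality statement: there is $c>0$ with $i(\gamma,\mathcal{F}) \geq c\cdot i(\gamma,\mathcal{F}_0)$ for all $\gamma \in \Sigma$, where $\mathcal{F}_0$ is a fixed auxiliary foliation; equivalently, $\Sigma$ is quantitatively bounded away from $[\mathcal{F}]$ in $\mathcal{PMF}(S)$.

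The second step is to realize $\mathcal{F}$ as the horizontal foliation of a flat metric $q_0 \in \mathsf{Flat}(S,\alpha)$ in such a way that the stratum's extra degrees of freedom --- the ones counted by $2k_0 - 2g + \varepsilon + 1$ --- are concentrated near the $k_0$ non-marked zeros and leave the horizontal data untouched. Concretely, I would take $\Omega_\Sigma$ to be a maximal family of metrics obtained from $q_0$ by moving the non-marked zeros and performing local surgeries near them, subject to staying inside $\mathsf{Flat}(S,\alpha)$ and keeping the horizontal foliation, together with its transverse measure, unchanged. That $\mathcal{F}$ can be realized at all in a prescribed stratum, and that the resulting family has dimension at least $2k_0 - 2g + \varepsilon + 1$, is the arithmetic core of the argument; the inequality $(2k_0 - 2g + \varepsilon + 1) > 0$ is exactly what guarantees the family is nonempty. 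I expect this realization in a fixed stratum to be the main obstacle --- it is the place where a uniform existence argument must replace the case-by-case constructions of \cite{DLR}.

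The third step is the length-preservation lemma: $\lambda_\Sigma$ is constant on $\Omega_\Sigma$. Here the uniform transversality is essential. Because every $\gamma \in \Sigma$ is quantitatively transverse to $\mathcal{F}$, its $q$-geodesic representative, for $q \in \Omega_\Sigma$, is forced to traverse the surface in a way that only registers the horizontal data of $q$ and the transverse measure of $\mathcal{F}$, both of which are held fixed along $\Omega_\Sigma$; the surgeries near the excess zeros cause the geodesic representative to reroute, but the change in its flat length is zero. Carrying this out rigorously requires controlling how the geodesic representative of $\gamma$ varies under the surgeries, and I expect this bookkeeping --- rather than any single conceptual point --- to be the technical heart of the proof.

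Finally, the displayed consequence for $\mathsf{Flat}(S)$ is the special case $\alpha = (1,\dots,1;-1)$, the principal stratum: a quadratic differential in that stratum has $k_0 = 4g-4+n$ non-marked (simple) zeros, so $2k_0 - 2g + \varepsilon + 1 = 6g + 2n - 8$, and this is positive precisely because of the standing assumption $3g + n - 3 \geq 2$. Since the principal stratum is open and dense in $\mathsf{Flat}(S)$, the family $\Omega_\Sigma$ constructed above is a deformation family inside $\mathsf{Flat}(S)$ of dimension at least $6g + 2n - 8$, as claimed.
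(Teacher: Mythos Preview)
Your plan has a genuine gap at the ``length-preservation lemma'' (your step~3), and it is conceptual, not bookkeeping. The flat length $\ell(\gamma,q)$ of a curve does \emph{not} depend only on the horizontal measured foliation of $q$, even when $\gamma$ is uniformly transverse to it. The $q$-geodesic representative of $\gamma$ is a concatenation of saddle connections, and each saddle connection contributes $\sqrt{x^2+y^2}$ to the length, where $(x,y)$ is its holonomy vector. Fixing the horizontal measured foliation pins down the $y$-data of all periods but says nothing about the $x$-data; your ``surgeries near the non-marked zeros'' change the $x$-components and hence change $\ell(\gamma,q)$. Transversality of $\gamma$ to $\mathcal{F}$ gives $i(\gamma,\mathcal{F})>0$, but that intersection number is only the total $|dy|$-variation along $\gamma$, not its flat length. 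Already on a flat torus a diagonal curve has length depending on both side lengths while the horizontal foliation fixes only one. So the family you describe will \emph{not} have constant $\Sigma$-length-spectrum, and there is no rerouting argument that repairs this.

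The paper's proof proceeds quite differently and does not single out one foliation. The key input is Lemma~\ref{ellq}, which writes $\ell(\gamma,\rho)=\tfrac12\int_0^\pi i(\nu_q(\theta),\gamma)\,d\theta$ as an integral over the whole circle of directional foliations. After applying Proposition~\ref{Mod} twice, one arranges that every $\nu_q(\theta)$ (for $q$ in a holonomy-coordinate neighborhood $U_{\hat\rho}$) is carried by a fixed maximal train track $\tau$, while every $\gamma\in\Sigma$ is carried by an efficiently transverse track $\tau'$. Then $i(\gamma,\nu_q(\theta))=i(\gamma,w_{\nu_q(\theta)})$ is linear in the weight, so the integral collapses to $\ell(\gamma,p(q))=i\bigl(\gamma,f(q)\bigr)$ for a single smooth map $f:U_{\hat\rho}\to W_\tau$ given by $f(q)=\tfrac12\int_0^\pi w_{\nu_q(\theta)}\,d\theta$. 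The deformation family is then a generic fiber of $f$, and the dimension bound is the linear-algebra count $\dim QD(S,\alpha)-\dim W_\tau = (4g+2k+\varepsilon-3)-(6g+2n-6)=2k_0-2g+\varepsilon+3$, losing two more dimensions under $p$. Smoothness of $f$ on a dense open set is supplied by Corollary~\ref{lengsmoo} applied to the finite coordinate set $\sigma$ of Proposition~\ref{Wcoord}. Your final paragraph, specializing to the stratum with $\alpha_i=1$ at non-marked points and $\alpha_i=-1$ at marked points to obtain $k_0=4g+n-4$ and hence dimension $6g+2n-8$, is correct and matches the paper.
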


The results above show that a finite set of closed curves can never be spectrally rigid when restricted to a single stratum with sufficiently high dimension. This is counterintuitive since these spaces of metrics are all finite dimensional. To complement Theorem~\ref{theorem:main1}, we prove that a finite set of closed curves can be \emph{locally} spectrally rigid over a stratum.

\begin{theorem}
\label{theorem:main2}
Let $\alpha = (\alpha_1, \dots, \alpha_{k} ; \varepsilon)$. For any $\rho \in \mathsf{Flat}(S,\alpha)$, there exists a set of closed curves $\Sigma \subset \mathcal{C}(S)$ such that $\Sigma$ is locally spectrally rigid at $\rho \in \mathsf{Flat}(S,\alpha)$ and $|\Sigma| \leq 15(2g + k - 2)$.
\end{theorem}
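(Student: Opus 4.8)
The plan is to reconstruct a flat metric near $\rho$ from finitely many closed-curve lengths by pinning down the side lengths of a fixed geodesic triangulation. First, fix a geodesic triangulation $T$ of the flat surface $\rho$ whose vertex set is exactly the set of cone points (the zeros together with the marked points); such a triangulation always exists, and by Euler's formula ($V=k$, $3F=2E$) it has exactly $E = 3(2g+k-2)$ edges $e_1,\dots,e_E$, each of them a saddle connection. For every flat metric $\rho'$ in a sufficiently small neighborhood $U\subset\mathsf{Flat}(S,\alpha)$ of $\rho$ the same combinatorial triangulation is realized geodesically; write $\ell_i(\rho')>0$ for the flat length of $e_i$ (the modulus of its relative period), so that, after shrinking $U$, each $\ell_i$ is real-analytic and bounded away from $0$ — these are essentially period coordinates. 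Since each triangle of $T$ is a Euclidean triangle determined up to isometry by its three side lengths and $\rho'$ is assembled by gluing these triangles along the fixed combinatorial pattern, a small perturbation of $\rho$ cannot replace a triangle by its mirror image; hence $\rho'\mapsto(\ell_1(\rho'),\dots,\ell_E(\rho'))$ is injective on $U$. It therefore suffices to produce a finite set $\Sigma$ of closed curves such that the numbers $(\ell_\gamma(\rho'))_{\gamma\in\Sigma}$ determine $(\ell_i(\rho'))_i$ for all $\rho'\in U$.

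To do this I will attach to each edge $e_i$ at most five closed curves built from $T$. The point is that a short loop running along $e_i$ together with a few of the triangulation edges incident to it can be chosen to meet the cone points only at angles $\ge\pi$, so that such a loop is its own geodesic representative, and moreover so that this persists on a whole neighborhood of $\rho$; for such a curve $\gamma$ one then has the exact local formula $\ell_\gamma(\rho')=\sum_{j\in J_\gamma}\ell_j(\rho')$ with $i\in J_\gamma$ and $|J_\gamma|$ uniformly bounded. Arranging these loops against a fixed spanning subforest of $T$, so that the incidence sets $J_\gamma$ form (after an appropriate ordering of the edges) a system with a unique solution for $(\ell_i)_i$, the lengths of the curves in $\Sigma$ recover all the $\ell_i(\rho')$, and composing with the inverse of the injective map above yields local spectral rigidity at $\rho$. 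Since $T$ has $E=3(2g+k-2)$ edges and we spend at most five curves per edge, $|\Sigma|\le 15(2g+k-2)$. One may equivalently phrase the last step via differentials: where $\ell_\gamma$ is analytic, $d\ell_\gamma$ is a nonnegative combination of the $d\ell_j$, and the loops are chosen so that the covectors $d\ell_\gamma(\rho)$ span $T^*_\rho\mathsf{Flat}(S,\alpha)$ — roughly five curves per edge suffice to capture both real directions of each period.

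The main obstacle is that $\rho$ is completely arbitrary, hence possibly a highly non-generic point of the stratum: the flat length function is only piecewise-analytic, geodesic representatives can be non-unique, and the clean formula $\ell_\gamma=\sum_{j\in J_\gamma}\ell_j$ may fail, or fail to persist, near such a $\rho$. Making the construction uniform — guaranteeing that each auxiliary loop has a combinatorially constant geodesic representative on an honest neighborhood of $\rho$, that it is essential, and that the global incidence system is invertible irrespective of the combinatorial type of $T$ and of the stratum data $\alpha$ — is where the real work lies. A convenient device is to build $T$ as a Delaunay-type triangulation at $\rho$, so that the relevant angle inequalities at the cone points are strict and therefore stable under perturbation; the constant $15$ then comes out of the explicit budget of edges of $T$ against loops needed per edge.
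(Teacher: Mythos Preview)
Your high-level strategy matches the paper's: triangulate by saddle connections, note that $E=3(2g+k-2)$, observe that the edge lengths determine the metric locally, and then recover each edge length from the lengths of at most five closed curves. The budget $5E=15(2g+k-2)$ is the same.

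The genuine gap is in how you build the closed curves. You propose to close up each edge $e_i$ using \emph{other triangulation edges}, hoping the resulting loop satisfies the angle condition at every cone point and does so \emph{strictly}, so that it persists on a neighborhood of $\rho$. You never show this is possible. At the endpoint $a$ of $e_i$ you need an outgoing edge making angle strictly greater than $\pi$ with $e_i$ on both sides; the triangulation edges at $a$ are in fixed directions with no reason to avoid the borderline case, and at a cone point of angle exactly $3\pi$ the window for the exit direction has length only $\pi$. Even if you find a good first step, the return path from $b$ to $a$ along triangulation edges must satisfy the strict angle condition at every intermediate vertex, and nothing in your outline controls this. The Delaunay remark does not help: Delaunay controls the angles of the triangles, not the angles a concatenated path makes at the cone points. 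Your fallback via the span of the $d\ell_\gamma$ has the same problem, since it presupposes you have already built curves whose geodesic representatives are combinatorially constant near $\rho$.

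The paper sidesteps this entirely by \emph{not} restricting the auxiliary segments to triangulation edges. The key technical step is a lemma: given any two cone points $a,b$, any direction $u$ at $a$, any direction $v$ at $b$, and any $\epsilon>0$, there is a geodesic segment from $a$ to $b$ whose initial and terminal directions are $\epsilon$-close to $u$ and $v$. This is proved by following a minimal (dense) ray from $a$ nearly in direction $u$, switching at a far intersection onto a minimal ray into $b$ nearly in direction $v$, and straightening; a Gauss--Bonnet argument pins the straightened segment between nearby cone points so the endpoint directions do not drift. With this lemma one can choose, for each saddle connection $\gamma_0$, four auxiliary segments $\gamma_1,\dots,\gamma_4$ whose directions at $a$ and $b$ are prescribed to make all the relevant angles strictly larger than $\pi$; the five closed curves $\gamma_0\cup\gamma_1$, $\gamma_0\cup\gamma_2$, $\gamma_1\cup\gamma_4$, $\gamma_2\cup\gamma_3$, $\gamma_3\cup\gamma_4$ are then stable geodesics near $\rho$, and an explicit formula recovers the length of $\gamma_0$ directly (no global linear system to invert). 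Separate, simpler cases handle edges with one or two marked endpoints. This is the missing idea in your proposal.
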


\noindent \textbf{Outline of paper.} In Section~\ref{qdfm} we define quadratic differentials and the associated flat metrics. In Section~\ref{qdhc} we recall the construction of holonomy coordinates for a general stratum of quadratic differentials. Using holonomy coordinates, we prove that the length function of a closed curve is smooth on a dense open set; see Proposition~\ref{stable}. Section~\ref{mftt} describes the relevant facts regarding measured foliations and train tracks. Theorem~\ref{theorem:main11} is proved in Section~\ref{thm1}. The key ingredient of the proof is the construction of a function $f$ mapping an open set of $QD(S,\alpha)$ to the weight space $W_\tau$ of a maximal train track. The length of any curve $\gamma \in \Sigma$ with respect to a flat metric induced by $q\in QD(S,\alpha)$ is equal to the intersection number $i(f(q),\gamma)$ by construction. The fibers of $f$ projects to deformation families of flat metrics of constant $\Sigma$-length-spectrum. Theorem~\ref{theorem:main2} is proved in Section~\ref{thm2}, where we observe that flat metrics in a stratum are locally determined by the lengths of saddle connections. We then construct, for any saddle connection, a set of at most 5 closed curves whose lengths determine the length of the saddle connection on a sufficiently small neighborhood of the given flat metric.

\noindent \textbf{Acknowledgment.} The author would like to thank his advisor Chris Leininger for all the guidance in the process of this work. The author would also like to thank the anonymous referee for the helpful comments.


\section{Quadratic Differentials and Flat Metrics}\label{qdfm}

Let $S$ be a closed surface of genus $g$ with $n$ marked points, and $\hat{S}$ the surface obtained by removing the marked points. By a \emph{quadratic differential} on $S$ we mean a complex structure on $S$ together with an integrable nonzero meromorphic quadratic differential. The quadratic differential is allowed to have poles of order at most one at marked points and is assumed to be holomorphic on $\hat{S}$. The space of all quadratic differentials, defined up to isotopy rel marked points, is denoted $QD(S)$. A point of $QD(S)$ will be denoted $q$, with the underlying complex structure implicit in the notation. Let $P$ be the set of marked points and $C_0(q)$ be the set of zeroes of $q$ at non-marked points. The set $C(q)=C_0(q)\cup P$ is called the set of \emph{cone points} of $q$. We use $C_0$ and $C$, suppressing the dependence on the quadratic differential when it is safe to do so. For more on quadratic differentials, the reader is referred to \cite{S}.

Integrating a square root of a nonzero quadratic differential $q$ in a small neighborhood of a point where $q$ is nonzero produces natural coordinates $z$ on $\hat{S}$ in which $q=dz^2$. The collection of all natural coordinates determines a \emph{semi-translation structure} on $S \smallsetminus C(q)$. This is an open cover $\{ U_{\beta} \}$ of $S \smallsetminus C(q)$ along with charts $\phi_\beta : U_\beta \to \mathbb{R}^2$ so that for every $\beta$, $\delta$ with $U_\beta \cap U_\delta \neq \emptyset$, we have
\[
\phi_\beta \circ \phi_\delta^{-1}(v) = \pm v + c, \text{ where }v,c \in \mathbb{R}^2.
\]
This induces a Euclidean metric on $S \smallsetminus C(q)$. For a semi-translation structure coming from a quadratic differential $q$, the metric extends to a Euclidean cone metric on all of $S$ so that at a zero of order $d$, one has a cone point with cone angle $(2+d)\pi$. If the point is a pole (and hence a marked point), we view this as a zero of order $-1$, and then the cone angle is $\pi$.

Denote the set of isotopy classes of unit area Euclidean cone metrics induced by quadratic differentials as $\mathsf{Flat}(S)$. There is a map $QD(S)\to \mathsf{Flat}(S)$ which is the quotient by rotation and scales to have area 1. Any quadratic differential $q\in QD(S)$ can be obtained as Euclidean polygons with isometric side-gluings. The flat metric $\rho \in \mathsf{Flat}(S)$ induced by $q$ is obtained by scaling the Euclidean polygons to have total area 1. 

The space of quadratic differentials $QD(S)$ is naturally stratified by the number of zeroes, the order of the zeroes, and the holonomy. Let $\alpha = (\alpha_1, \alpha_2, \dots, \alpha_{k}; \varepsilon)$ where $k=k_0+n$, 
\[
\alpha_1 \geq \dots \geq \alpha_{k_0} \geq 1 \text{, } \alpha_{k_0+1} \geq \dots \geq \alpha_{k} \geq -1 \text{, } \sum_{i=1}^{k} \alpha_i = 4g-4,
\] 
and $\varepsilon = \pm 1$ denotes the holonomy of the metric in $S \smallsetminus C(q)$. Each $\alpha_i$ with $1 \leq i \leq k_0$ corresponds to a zero of order $\alpha_i$ and each $\alpha_i$ with $k_0+1 \leq i \leq k$ is the order at a marked point. We write $QD(S,\alpha)$ to denote the stratum of $QD(S)$ consisting of quadratic differentials whose zeroes/poles and holonomy are given by $\alpha$. This descends to a stratification of $\mathsf{Flat}(S)$. Each $\alpha_i$ corresponds to a cone point with cone angle $(\alpha_i+2)\pi$. The holonomy is described by $\varepsilon$ as above. We denote the stratum of $\mathsf{Flat}(S)$ defined by $\alpha$ as $\mathsf{Flat}(S,\alpha)$.

Let $\mathcal{C}(S)$ be the set of homotopy classes of closed curves on $\hat{S}$ and $\mathcal{S}(S)$ be the set of homotopy classes of simple closed curves on $\hat{S}$, where all the curves are assumed to be essential (non-nullhomotopic and non-peripheral). For $c \in \mathcal{C}(S)$, the length of $c$ with respect to $\rho \in \mathsf{Flat}(S)$ is the infimum of lengths of representatives of $c$ in a representative of $\rho$, denoted by $\ell(c,\rho)$. In fact, there exists a closed geodesic with respect to $\rho$ that realizes the length $\ell(c,\rho)$ though it may lie in $S$ rather than $\hat{S}$. This is obtained as the limit of representatives of $c$ in $\hat{S}$ with lengths approaching $\ell(c,\rho)$.

We digress a bit here to talk about what geodesics look like with respect to metrics in $\mathsf{Flat}(S)$. Let $c \in \mathcal{C}(S)$ be a closed curve on $\hat{S}$. For a fixed $\rho \in \mathsf{Flat}(S)$ induced by $q \in QD(S)$, $c$ is a geodesic with respect to $\rho$ if and only if $c$ is a concatenation of straight line segments connecting cone points in $S \smallsetminus C(q)$, called \emph{saddle connections}, and $c$ satisfies the \emph{angle condition} at each point in $C_0$. The angle condition requires that each time $c$ passes through a point in $C_0$, the angles on both sides of $c$ must be greater or equal to $\pi$. If $c$ is a closed curve on $S$ obtained as a limit of representatives of a homotopy class in $\hat{S}$, $c$ is a geodesic with respect to $\rho$ if and only if $c \smallsetminus P$ satisfies the conditions above and at points in $P$ the angle condition is satisfied on the ``side of $c$ opposite the point in $P$''; see \cite{S}*{Section~8} for a more precise description. See Figure~\ref{geod} for examples.

\begin{figure}[h]
\begin{tikzpicture}[scale=1.5]
\draw (-2,-.8) node{Away from zeroes};
 \draw [line width=.5mm] [->] (-3,0.5) -- (-1,1.5); 
 \draw [blue] (-3,0.25) .. controls (-2,0) and (-2,2) .. (-1,1.75);
\draw (1,-.8) node{Cone angle $4\pi$};
 \draw [line width=.5mm] [->] (-0.25,0.375) -- (.5,.75); 
 \draw [line width=.5mm] (.5,0.75) -- (1,1); 
 \draw [line width=.5mm] [->] (1,1) -- (2.25,.5); 
 \draw [blue] (-0.25,0.175) .. controls (1.25,2.15) and (1.75,.65) .. (2.25,.25);
 \draw (-.25,-.25) -- (2.25,2.25);
 \draw (-.25,2.25) -- (2.25,-.25);
 \draw (.75,.875) arc (195:-15:.3);
 \draw (1.1,1.35) node{$>\pi$};
 \draw (.7,.85) arc (195:348:.3);
 \draw (1,.5) node{$>\pi$};
\draw (4,-.8) node{Marked point};
 \draw [line width=.5mm] [->] (2.75,1.0625) -- (3.5,1.025); 
 \draw [line width=.5mm] (3.5,1.025) -- (3.96,1.02); 
 \draw [line width=.5mm] [->] (4.04,1.03) -- (5,2); 
 \filldraw [gray] (4,1) circle (.05);
 \draw (3.7,1.02) arc (178:450:0.28);
 \draw (4,1.29) arc (90:270:0.23);
 \draw (4,0.83) arc (270:398:0.19);
 \draw (4,.65) node{$>\pi$};
 \draw [blue] (2.75,1.2625) .. controls (4.65,-.75) and (5,1.5) .. (4,1.5);
 \draw [blue] (4,1.5) .. controls (2.5,1.25) and (4.35,-.8) .. (5,1.8);
\end{tikzpicture}
\caption{Examples of curves and geodesic representatives.}
\label{geod}
\end{figure}
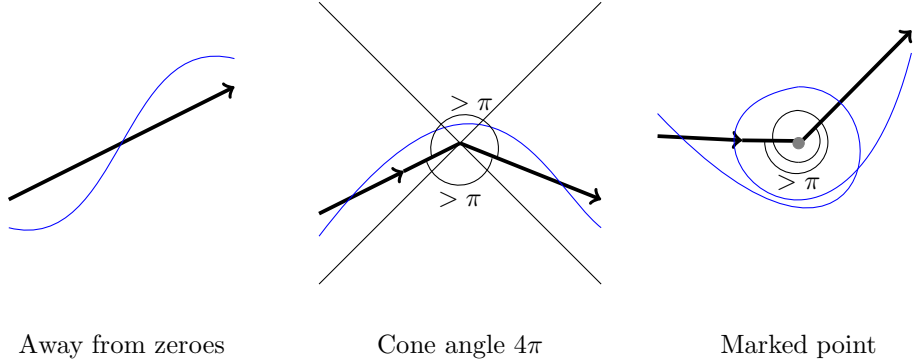

The \emph{length function} of $\Sigma \subset \mathcal{C}(S)$ with respect to $\mathsf{Flat}(S,\alpha)$ is the function
\[
\lambda_\Sigma : \mathsf{Flat}(S,\alpha) \to \mathbb{R}^\Sigma \text{, with } \lambda_\Sigma(\rho) = \big( \ell(c,\rho) \big)_{c \in \Sigma}.
\] 
The \emph{$\Sigma$-length-spectrum} of $\rho \in \mathsf{Flat}(S,\alpha)$ is $\lambda_\Sigma(\rho)$, an element of $\mathbb{R}^\Sigma$. Our goal is to investigate the question: when is the length function of some set $\Sigma \subset \mathcal{C}(S)$ injective? If not, when is the length function locally injective? We rephrase the question as a \emph{length spectral rigidity} problem. The set $\Sigma \subset \mathcal{C}(S)$ is called \emph{spectrally rigid} over $\mathsf{Flat}(S,\alpha)$ if the length function $\lambda_\Sigma$ is injective. We say $\Sigma$ is \emph{locally spectrally rigid} at $\rho \in \mathsf{Flat}(S,\alpha)$ if there exists a neighborhood $U_\rho \subset \mathsf{Flat}(S,\alpha)$ of $\rho$ such that the length function $\lambda_\Sigma\big|_{U_\rho}$ is injective. The question becomes one of characterizing (locally) spectrally rigid sets of closed curves. The length spectral rigidity problem can be asked for more general families of metrics. For classical results and more motivation see \cite{DLR}*{Introduction}.


\section{Quadratic Differentials and Holonomy Coordinates}\label{qdhc}

We introduce Abelian differentials which are closely related to quadratic differentials. By an \emph{Abelian differential} on $S$ we mean a complex structure on $S$ together with a nonzero holomorphic 1-form. From an Abelian differential on $S$ we can obtain a collection of charts on the complement of the zeroes for which the transition maps are translations. Therefore an Abelian differential $\omega$ on $S$ with zeroes $C_0(\omega)$ determines a \emph{translation structure} on $S$, which is an open cover $\{ U_{\beta} \}$ of $S \smallsetminus C(\omega)$ along with charts $\phi_\beta : U_\beta \to \mathbb{R}^2$ so that for every $\beta$, $\delta$ with $U_\beta \cap U_\delta \neq \emptyset$, we have
\[
\phi_\beta \circ \phi_\delta^{-1}(v) = v + c, \text{ where }v,c \in \mathbb{R}^2.
\]
Let $\mathcal{H}(S)$ denote the space of Abelian differentials on $S$ up to isotopy rel marked points. The space $\mathcal{H}(S)$ is stratified by the number and the order of the zeroes. We use $\mathcal{H}(S,\alpha)$ to denote the stratum of $\mathcal{H}(S)$ defined by $\alpha$. Note that $\alpha$ here includes the number and order of zeroes of the Abelian differential but not the holonomy, which is always trivial.

Given an Abelian differential $\omega \in \mathcal{H}(S,\alpha)$, integrating $\omega$ over relative chains determines a relative cohomology class in $H^1(S,C(\omega);\mathbb{C})$.  This defines a homeomorphism from a neighborhood $U_\omega$ of $\omega$ in $\mathcal{H}(S,\alpha)$ to an open set $V_\omega$ in $H^1(S,C(\omega);\mathbb{C})$.  Next, given a basis of the relative homology $H_1(S,C(\omega);\mathbb{C})$, this determines an isomorphism $H^1(S,C(\omega);\mathbb{C}) \to \mathbb{C}^m$ for some $m$.  Composing with the homeomorphism $U_\omega \to V_\omega$, we obtain \emph{holonomy coordinates} $U_\omega \to \mathbb{C}^m$; see \cite{Z}*{Section~3.3} for more details.

Now let $QD(S,\alpha)$ be a stratum with trivial holonomy, that is, $\varepsilon = 1$.  For any $q \in QD(S,\alpha)$, there exists $\omega \in \mathcal{H}(S,\alpha')$ such that $q = \omega^2$.  This $\omega$ is unique up to sign, and we can find neighborhoods $U_q \subset QD(S,\alpha)$ of $q$ and $U_\omega \subset \mathcal{H}(S,\alpha')$ of $\omega$ so that $U_\omega \to U_q$ defined by squaring Abelian differentials defines a homeomorphism.  If $U_\omega$ is sufficiently small so that there are holonomy coordinates $U_\omega \to \mathbb{C}^m$, then inverting the homeomorphism $U_\omega \to U_q$ and composing with these coordinates defines \emph{holonomy coordinates} $U_q \to \mathbb{C}^m$ about $q$ in $QD(S,\alpha)$.

In the case when the holonomy is nontrivial, that is, $\varepsilon = -1$, we consider the double (branched) cover of $S$ in which the holonomy becomes trivial. Using a similar idea in \cite{M}, there is a canonical \emph{double cover} $DS \to S$ determined by the holonomy and the semi-translation structure; see Figure~\ref{polygon} for an example. 

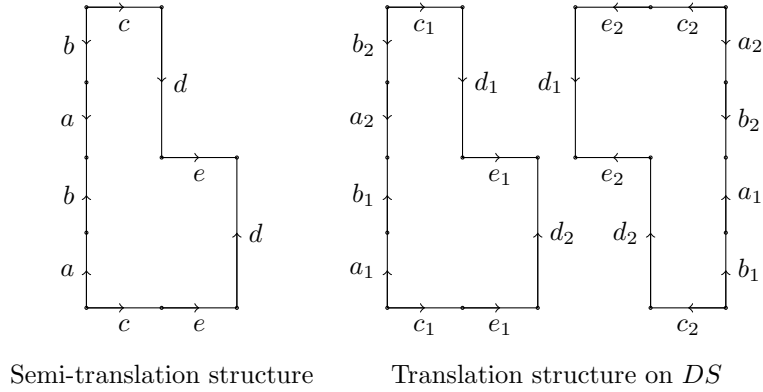
\begin{figure}[ht]
\begin{tikzpicture}[scale=0.5]
\draw (-6,-1.75) node{Semi-translation structure};

\draw (-8,0) -- (-8,8) -- (-6,8) -- (-6,4) -- (-4,4) -- (-4,0) -- (-8,0);
\draw [->] (-8,0) -- (-8,1) node[left=1pt]{$a$};
\draw [->] (-8,2) -- (-8,3) node[left=1pt]{$b$};
\draw [->] (-8,6) -- (-8,5) node[left=1pt]{$a$};
\draw [->] (-8,8) -- (-8,7) node[left=1pt]{$b$};
\draw [->] (-8,8) -- (-7,8) node[below=1pt]{$c$};
\draw [->] (-6,8) -- (-6,6) node[right=1pt]{$d$};
\draw [->] (-6,4) -- (-5,4) node[below=1pt]{$e$};
\draw [->] (-4,0) -- (-4,2) node[right=1pt]{$d$};
\draw [->] (-6,0) -- (-5,0) node[below=1pt]{$e$};
\draw [->] (-8,0) -- (-7,0) node[below=1pt]{$c$};
\draw (-8,0) circle (.04);
\draw (-8,2) circle (.04);
\draw (-8,4) circle (.04);
\draw (-8,6) circle (.04);
\draw (-8,8) circle (.04);
\draw (-6,8) circle (.04);
\draw (-6,4) circle (.04);
\draw (-4,4) circle (.04);
\draw (-4,0) circle (.04);
\draw (-6,0) circle (.04);

\draw (4.5,-1.75) node{Translation structure on $DS$};

\draw (0,0) -- (0,8) -- (2,8) -- (2,4) -- (4,4) -- (4,0) -- (0,0);
\draw [->] (0,0) -- (0,1) node[left=1pt]{$a_1$};
\draw [->] (0,2) -- (0,3) node[left=1pt]{$b_1$};
\draw [->] (0,6) -- (0,5) node[left=1pt]{$a_2$};
\draw [->] (0,8) -- (0,7) node[left=1pt]{$b_2$};
\draw [->] (0,8) -- (1,8) node[below=1pt]{$c_1$};
\draw [->] (2,8) -- (2,6) node[right=1pt]{$d_1$};
\draw [->] (2,4) -- (3,4) node[below=1pt]{$e_1$};
\draw [->] (4,0) -- (4,2) node[right=1pt]{$d_2$};
\draw [->] (2,0) -- (3,0) node[below=1pt]{$e_1$};
\draw [->] (0,0) -- (1,0) node[below=1pt]{$c_1$};
\draw (0,0) circle (.04);
\draw (0,2) circle (.04);
\draw (0,4) circle (.04);
\draw (0,6) circle (.04);
\draw (0,8) circle (.04);
\draw (2,8) circle (.04);
\draw (2,4) circle (.04);
\draw (4,4) circle (.04);
\draw (4,0) circle (.04);
\draw (2,0) circle (.04);

\draw (5,8) -- (9,8) -- (9,0) -- (7,0) -- (7,4) -- (5,4) -- (5,8);
\draw [->] (9,8) -- (9,7) node[right=1pt]{$a_2$};
\draw [->] (9,6) -- (9,5) node[right=1pt]{$b_2$};
\draw [->] (9,2) -- (9,3) node[right=1pt]{$a_1$};
\draw [->] (9,0) -- (9,1) node[right=1pt]{$b_1$};
\draw [->] (9,0) -- (8,0) node[below=1pt]{$c_2$};
\draw [->] (7,0) -- (7,2) node[left=1pt]{$d_2$};
\draw [->] (7,4) -- (6,4) node[below=1pt]{$e_2$};
\draw [->] (5,8) -- (5,6) node[left=1pt]{$d_1$};
\draw [->] (7,8) -- (6,8) node[below=1pt]{$e_2$};
\draw [->] (9,8) -- (8,8) node[below=1pt]{$c_2$};
\draw (5,8) circle (.04);
\draw (7,8) circle (.04);
\draw (9,8) circle (.04);
\draw (9,6) circle (.04);
\draw (9,4) circle (.04);
\draw (9,2) circle (.04);
\draw (9,0) circle (.04);
\draw (7,0) circle (.04);
\draw (7,4) circle (.04);
\draw (5,4) circle (.04);

\end{tikzpicture}
\caption{Construction of the double cover $DS$.}
\label{polygon}
\end{figure}

We consider $\zeta \in QD(DS)$ that is determined by $q \in QD(S,\alpha)$. Write $\zeta \in QD(DS,\alpha_{DS})$, where $\alpha_{DS}$ describes the stratum $\zeta$ lies in. There exists holonomy coordinates $U_\zeta \to \mathbb{C}^m$ about $\zeta$ as above since $QD(DS,\alpha_{DS})$ is a stratum with trivial holonomy. 

We have two natural maps associated to this setup. The covering map $DS \to S$ and the involution $DS \to DS$ that generates the group of deck transformations. The involution induces an involution on $H^1(DS,C(\zeta);\mathbb{C})$ with eigenvalues $\pm 1$. If $\omega \in \mathcal H(DS,\alpha'_{DS})$ is such that $\omega^2 = \zeta$, then we have a homeomorphism $U_\zeta \to U_\omega \to V_\omega \subset H^1(DS,C(\zeta);\mathbb{C})$ as described above.  This maps the subset of $U_\zeta$ consisting of quadratic differentials in $U_\zeta$ lifted from $S$ precisely onto the intersection with the $(-1)$-eigenspace.  Let $W_q \to U_q \subset U_\zeta$ be a homeomorphism from a neighborhood $W_q \subset QD(S,\alpha)$ to $U_q$.

Given a basis for the relative homology determining holonomy coordinates $U_\zeta \to \mathbb{C}^m$, we can choose a subset of the basis elements so that the map obtained by pairing with only these basis vectors $H^1(DS,C(\zeta);\mathbb{C}) \to \mathbb C^{m'}$ restricts to an isomorphism on the $(-1)$-eigenspace.  The composition $U_\zeta \to V_\omega \to \mathbb C^{m'}$ is therefore a homeomorphism on the subset $U_q \subset U_\zeta$ consisting of quadratic differentials lifted from $S$.   Composing $W_q \to U_q$ with this map, we obtain \emph{holonomy coordinates} $W_q \to U_q \to \mathbb{C}^{m'}$.

We use holonomy coordinates to define local coordinates on $\mathsf{Flat}(S,\alpha)$ as follows. Let $\rho\in\mathsf{Flat}(S,\alpha)$ be induced by $q \in QD(S,\alpha)$. If we consider the open neighborhood $U_q\subset QD(S,\alpha)$ with holonomy coordinates $\varphi:U_q\to\mathbb{C}^m$, then there exists a real codimension two smooth submanifold in $U_q$ that is mapped homeomorphically onto an open set in $\mathsf{Flat}(S,\alpha)$. The submanifold is defined by $\pi_1\circ\varphi(q)$ being a positive real number (which effectively mods out by rotation) and the area of the quadratic differential being 1, where $\pi_1$ is the projection to the first coordinate. Therefore we can obtain from this \emph{holonomy coordinates} about $\rho \in \mathsf{Flat}(S,\alpha)$.

We will need the following result from \cite{Ve}. We provide a proof for completeness. We write $\operatorname{dim}$ for real dimension and $\operatorname{dim}_\mathbb{C}$ for complex dimension.

\begin{prop}\label{dimcount}
Let $\alpha=(\alpha_1,\dots,\alpha_{k};\varepsilon)$. Then 
\[
\operatorname{dim}(QD(S,\alpha)) = 4g+2k+\varepsilon-3 \text{ and }\operatorname{dim}(\mathsf{Flat}(S,\alpha)) = 4g+2k+\varepsilon-5.
\] 
\end{prop}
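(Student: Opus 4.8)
The plan is to obtain the $\mathsf{Flat}(S,\alpha)$ formula as an immediate consequence of the $QD(S,\alpha)$ formula: the discussion above identifies $\mathsf{Flat}(S,\alpha)$ locally with a real-codimension-two smooth submanifold of $QD(S,\alpha)$, so once $\operatorname{dim}(QD(S,\alpha)) = 4g+2k+\varepsilon-3$ is established, $\operatorname{dim}(\mathsf{Flat}(S,\alpha)) = 4g+2k+\varepsilon-5$ follows. To compute $\operatorname{dim}(QD(S,\alpha))$ through holonomy coordinates I first record the Abelian case. For a closed connected surface $X$ of genus $h$ and a finite nonempty set $C\subset X$, the long exact sequence of the pair $(X,C)$, together with $H^1(C;\mathbb{C})=0$ and $H^0(X,C;\mathbb{C})=0$, produces
\[
0 \to H^0(C;\mathbb{C})/H^0(X;\mathbb{C}) \to H^1(X,C;\mathbb{C}) \to H^1(X;\mathbb{C}) \to 0 ,
\]
so $\operatorname{dim}_\mathbb{C} H^1(X,C;\mathbb{C}) = 2h+|C|-1$; since holonomy coordinates identify a neighborhood in $\mathcal{H}(X,\beta)$ with an open subset of $H^1(X,C;\mathbb{C})$, this gives $\operatorname{dim}(\mathcal{H}(X,\beta)) = 4h+2|C|-2$.

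When $\varepsilon=1$, every $\alpha_i$ is even and local squaring of Abelian differentials identifies a neighborhood in $QD(S,\alpha)$ with a neighborhood in $\mathcal{H}(S,\alpha')$, where $\alpha'$ records the same set of $k$ cone and marked points with the halved orders $\alpha_i/2\ge 0$. Hence $\operatorname{dim}(QD(S,\alpha)) = 4g+2k-2 = 4g+2k+\varepsilon-3$.

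When $\varepsilon=-1$, I pass to the double cover $DS\to S$. Let $b$ be the number of odd entries $\alpha_i$, that is, the cone points with nontrivial local holonomy (including poles of order $-1$); these are exactly the branch points of $DS\to S$. Riemann--Hurwitz gives genus $g(DS) = 2g-1+b/2$, and the preimage $C(\zeta)$ of the $k$ cone points consists of $|C(\zeta)| = 2(k-b)+b = 2k-b$ points, of which $b$ are fixed by the deck involution $\sigma$ and the remaining $2(k-b)$ form $k-b$ swapped pairs. Since $QD(S,\alpha)$ is locally modeled, over $\mathbb{C}$, on the $(-1)$-eigenspace of $\sigma^{*}$ acting on $H^1(DS,C(\zeta);\mathbb{C})$, I decompose the $\sigma$-equivariant short exact sequence above into $\sigma^{*}$-eigenspaces (exact over $\mathbb{C}$) to obtain
\[
0 \to \big(H^0(C(\zeta);\mathbb{C})/H^0(DS;\mathbb{C})\big)^{-} \to H^1(DS,C(\zeta);\mathbb{C})^{-} \to H^1(DS;\mathbb{C})^{-} \to 0 .
\]
The first term has dimension $k-b$ (one antiinvariant function per swapped pair, while the $b$ fixed points and the constants contribute only to the $(+1)$-part), and the transfer argument --- $\pi^{*}$ followed by the transfer map is multiplication by $2$ --- shows that $\pi^{*}\colon H^1(S;\mathbb{C})\to H^1(DS;\mathbb{C})$ is injective with image the $\sigma^{*}$-fixed subspace, so $\operatorname{dim}_\mathbb{C} H^1(DS;\mathbb{C})^{-} = 2g(DS)-2g = 2g-2+b$. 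Adding, $\operatorname{dim}_\mathbb{C} H^1(DS,C(\zeta);\mathbb{C})^{-} = (k-b)+(2g-2+b) = 2g+k-2$, whence $\operatorname{dim}(QD(S,\alpha)) = 4g+2k-4 = 4g+2k+\varepsilon-3$, finishing the computation in both cases.

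The $\varepsilon=-1$ case is where I expect the real work to be: one has to track carefully how the branch points (fixed by $\sigma$) and the remaining preimages (swapped in pairs) split $H^0(C(\zeta);\mathbb{C})$, and one has to justify $H^1(DS;\mathbb{C})^{+} = \pi^{*}H^1(S;\mathbb{C})$ for the (possibly branched) double cover, then combine this with Riemann--Hurwitz. The cancellation of $b$ from the final count is a useful consistency check and also shows that the subcase $b=0$, in which $\varepsilon=-1$ arises from a nontrivial class in $H_1(S;\mathbb{Z}/2\mathbb{Z})$ rather than from odd-order zeros, needs no separate treatment.
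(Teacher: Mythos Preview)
Your proof is correct and follows the same overall strategy as the paper: reduce $\mathsf{Flat}(S,\alpha)$ to $QD(S,\alpha)$ via the codimension-two slice, handle $\varepsilon=1$ directly through $\dim_{\mathbb{C}} H^1(S,C;\mathbb{C})=2g+k-1$, and for $\varepsilon=-1$ pass to the canonical double cover and compute the $(-1)$-eigenspace of the deck involution on $H^1(DS,C(\zeta);\mathbb{C})$. The only genuine difference is in how that eigenspace dimension is extracted. The paper identifies the $(+1)$-eigenspace with the pullback $\pi^{*}H^1(S,C(q);\mathbb{C})$, of dimension $2g+k-1$, and subtracts this from the total $2g_{DS}+k_{DS}-1$. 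You instead split the $\sigma$-equivariant short exact sequence for relative cohomology into eigenspaces and compute the $(-1)$-parts of $H^0(C(\zeta))/H^0(DS)$ and $H^1(DS)$ separately, invoking the transfer identity $\pi^{*}\pi_{*}=1+\sigma^{*}$ to pin down $H^1(DS)^{+}\cong H^1(S)$. Your route is a bit more explicit about \emph{why} the invariant part matches the base (the paper simply asserts that the pulled-back basis spans the $(+1)$-eigenspace), and your observation that $b$ cancels, so the unbranched case $b=0$ needs no special handling, is a nice sanity check absent from the paper. Conversely, the paper's subtraction is marginally quicker once the pullback identification is granted. Either way the arithmetic lands on $\dim_{\mathbb{C}}=2g+k-2$, hence real dimension $4g+2k-4$.
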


\begin{proof}
If $\varepsilon=1$, then
\[
\operatorname{dim}_\mathbb{C} H^1(S,C(q);\mathbb{C}) = \operatorname{dim}_\mathbb{C} H_1(S,C(q);\mathbb{C}) = 2g+k-1
\]
implies that
\[
\operatorname{dim}(QD(S,\alpha)) = 2(\operatorname{dim}_{\mathbb{C}}(QD(S,\alpha))) = 4g+2k-2.
\]

For the case when $\varepsilon=-1$, fix $q\in QD(S,\alpha)$ and $\zeta\in QD(DS,\alpha_{DS})$ induced by $q$. Let $k_1$ be the number of $i$ for which $\alpha_i$ is odd and $k_2$ be the number of $i$ for which $\alpha_i$ is even. Each odd $\alpha_i$ corresponds to a $2(\alpha_i+1)$ in $\alpha_{DS}$ and each even $\alpha_i$ corresponds to a pair of $\alpha_i$ in $\alpha_{DS}$. Hence the size of the set of cone points for $DS$ is 
\[
k_{DS}=k_1 + 2k_2,
\]
and the genus of $DS$ is
\[
g_{DS}=2g+\frac{k_1}{2}-1,
\]
obtained by calculating the Euler characteristic using the Riemann-Hurwitz formula. Holonomy coordinates about $\zeta$ has complex dimension $(2g_{DS}+k_{DS}-1)$ as above. The dimension of the $(-1)$-eigenspace is computed by subtracting the dimension of the $(+1)$-eigenspace. The pullback of a basis of $H^1(S,C(q);\mathbb{C})$ to $H^1(DS,C(\zeta);\mathbb{C})$ is invariant under the involution and provides a basis of the $(+1)$-eigenspace. The complex dimension of the $(+1)$-eigenspace is equal to the complex dimension of $H^1(S,C(q);\mathbb{C})$. Therefore
\[
\operatorname{dim}(QD(S,\alpha)) = 2[(2g_{DS}+k_{DS}-1)-(2g+k-1)] = 4g+2k-4.
\]

The space $\mathsf{Flat}(S,\alpha)$ can be obtained by scaling by nonzero complex numbers. Hence $\operatorname{dim}(\mathsf{Flat}(S,\alpha)) = \operatorname{dim}(QD(S,\alpha)) - 2$.
\end{proof}

With holonomy coordinates, we have a better description of the topology of the strata $QD(S,\alpha)$ and $\mathsf{Flat}(S,\alpha)$. Intuitively, an open neighborhood of $q\in QD(S,\alpha)$ can be imagined as perturbing Euclidean polygons with side-gluings. We say a closed geodesic is \emph{stable} inside an open neighborhood if the homotopy classes, rel endpoints, of the saddle connections making up the geodesic is constant. As a consequence, the combinatorics of the saddle connections making up the closed geodesic will be the same inside the open neighborhood.

\begin{prop}\label{stable}
For any closed curve $c$, there exists an open dense subset of $\mathsf{Flat}(S,\alpha)$ in which the geodesic representative of $c$ is stable.
\end{prop}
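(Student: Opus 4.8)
The plan is to show that the set of $q\in QD(S,\alpha)$ near which $c$ has a geodesic representative of locally constant combinatorial type is open and dense, and then to descend to $\mathsf{Flat}(S,\alpha)$. Rotating or rescaling a quadratic differential is an isometry up to scale, so the homotopy classes rel endpoints of the saddle connections of any closed geodesic are invariant under the $\mathbb{C}^\ast$-action whose quotient is $\mathsf{Flat}(S,\alpha)$; hence a $\mathbb{C}^\ast$-invariant open dense set in $QD(S,\alpha)$ has open dense image, and it suffices to work upstairs (the same argument also runs verbatim in $\mathsf{Flat}(S,\alpha)$ using the holonomy coordinates from Section~\ref{qdhc}). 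Fix $q_0$, a holonomy chart $U_{q_0}$ as in Section~\ref{qdhc} with $\overline{U_{q_0}}$ compact in the stratum, and a small ball $B\subset U_{q_0}$. Two finiteness inputs are available on $B$: the function $\ell(c,\cdot)$ is continuous, and the systole (shortest saddle-connection length) is continuous and bounded below by some $\delta>0$ on $\overline{U_{q_0}}$; and, by a standard non-accumulation property of strata, homotopy classes rel endpoints of saddle connections of length at most $L$ form a finite set, uniformly for $q$ in $\overline{B}$ (in holonomy coordinates such a saddle connection has bounded holonomy vector, which is an integral-linear function of the coordinates). Consequently, for $q\in B$ every geodesic representative of $c$ built from saddle connections uses at most $\ell(c,q)/\delta$ of them, each drawn from a fixed finite list of homotopy classes, so only finitely many \emph{combinatorial types} $\sigma_1,\dots,\sigma_N$ occur over $B$. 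Here a combinatorial type $\sigma$ records the cyclic sequence of saddle connections, their homotopy classes rel endpoints, the turning/sector data at the cone points traversed, and the free homotopy class $c$ — enough to determine a single candidate geodesic $g_\sigma(q)$ by straightening each saddle connection in its class (using uniqueness of the straightening, or enlarging $\sigma$ by any further finite data needed to make this unambiguous). Every $q\in B$ admits such a representative, taking a boundary geodesic of the maximal cylinder when $c$ is its core.

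For each $\sigma$ let $O_\sigma\subset B$ be the set of $q$ on which $g_\sigma(q)$ is defined and has the cone-point incidences prescribed by $\sigma$; this is open, since those incidences persist under small perturbation. Let $Z_\sigma\subset O_\sigma$ be the subset on which $g_\sigma(q)$ in addition satisfies the angle condition of Section~\ref{qdfm}; as that condition is a finite conjunction of non-strict inequalities among angles that vary continuously on $O_\sigma$, the set $Z_\sigma$ is relatively closed in $O_\sigma$. Since a curve satisfying the angle condition realizes the minimal length in its free homotopy class (\cite{S}*{Section~8}), $Z_\sigma$ is exactly the set of $q\in B$ at which $g_\sigma(q)$ is a geodesic representative of $c$; hence $B=\bigcup_{i=1}^N Z_{\sigma_i}$. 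On the interior $Z_\sigma^\circ$ the curve $g_\sigma(q)$ is, for all nearby $q$, a geodesic representative of $c$ whose saddle connections have constant homotopy classes rel endpoints, so $\bigcup_i Z_{\sigma_i}^\circ$ is an open subset of the stable locus; it remains to show this union is dense in $B$.

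Because $Z_\sigma$ is relatively closed in the open set $O_\sigma$, writing $\overline{Z_\sigma}$ for its closure in $B$ one has $Z_\sigma=O_\sigma\cap\overline{Z_\sigma}$ and $Z_\sigma^\circ=O_\sigma\cap\operatorname{int}(\overline{Z_\sigma})$, so $Z_\sigma\setminus Z_\sigma^\circ=O_\sigma\cap\partial\overline{Z_\sigma}\subseteq\partial\overline{Z_\sigma}$, and the boundary of a closed set is closed with empty interior. Therefore $B\setminus\bigcup_i Z_{\sigma_i}^\circ\subseteq\bigcup_i\big(Z_{\sigma_i}\setminus Z_{\sigma_i}^\circ\big)$ is contained in a finite union of closed nowhere-dense sets, hence is nowhere dense; so $\bigcup_i Z_{\sigma_i}^\circ$ is open and dense in $B$. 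As $q_0$ and $B$ were arbitrary, the stable locus is open and dense in $QD(S,\alpha)$, and its image is the asserted open dense subset of $\mathsf{Flat}(S,\alpha)$. The step I expect to be the main obstacle is the finiteness input — that saddle-connection classes of bounded length do not accumulate, uniformly over a neighborhood in the stratum — since this is exactly what keeps $\bigcup_i\partial\overline{Z_{\sigma_i}}$ a \emph{finite} (not merely countable) union of closed nowhere-dense sets, so that its complement is open rather than just a dense $G_\delta$. Subsidiary care is needed to make the combinatorial type determine $g_\sigma(q)$ unambiguously, and to accommodate the non-uniqueness of geodesic representatives for curves that are cores of cylinders.
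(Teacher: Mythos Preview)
Your argument is correct and reaches the same conclusion, but the route differs from the paper's. The paper proceeds constructively: starting from a chart, it first shrinks to an open set $U(a_N)$ on which the collection $X$ of relative homotopy classes represented by saddle connections of length at most $r$ is constant; it then intersects with the loci $E_{i,j}$ where each pair of saddle connections in $X$ is either always parallel or always nonparallel, obtaining an open dense $V\subset U(a_N)$. On $V$ the geodesic representative of $c$ is built from saddle connections in $X$ with fixed parallelism data, so its combinatorics cannot change. Your argument instead enumerates the finitely many combinatorial types $\sigma$, shows each locus $Z_\sigma$ (where $g_\sigma$ is a geodesic representative) is relatively closed in an open set $O_\sigma$, and applies a finite Baire-type argument to conclude that $\bigcup_i Z_{\sigma_i}^\circ$ is open and dense.

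Both arguments rest on the same finiteness input you correctly flag: finitely many saddle-connection classes of bounded length, uniformly over a neighborhood. The paper's version is more geometric and pinpoints the mechanism of instability---degeneration occurs exactly when the angle at a cone point along the geodesic hits $\pi$, i.e., when two consecutive saddle connections become parallel---which is why fixing parallelism suffices. Your version is softer and more modular: once the finiteness is in hand, the topology does the work, and you need not analyze how geodesics actually change. The cost is the mild bookkeeping you acknowledge (ensuring $\sigma$ determines $g_\sigma$ unambiguously, handling cylinder curves by passing to a boundary), and the fact that the argument yields a stable geodesic representative rather than uniqueness---but this is all the paper needs for Corollary~\ref{lengsmoo}.
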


\begin{proof}
Fix an arbitrary open set $U \subset \mathsf{Flat}(S,\alpha)$. Let $U_0$ be an open neighborhood of an arbitrary $\rho\in U$ with holonomy coordinates $U_0 \to V \subset \mathbb{C}^m$. In holonomy coordinates, the length function of $c$ becomes a function $V \to \mathbb{R}$. Using the description of geodesics in Section~\ref{qdfm}, the length of the closed curve $c$ is a finite sum of Euclidean lengths of saddle connections that belong to a geodesic representative of $c$. 

Let $U_1\subset U_0 \cap U$ be an open set such that the length function of $c$ is bounded by some $r>0$ on $U_1$. Inside $U_1$ there is a uniform bound $N$ on the number of relative homotopy classes of paths between cone points with length at most $r$ and we denote the set by $\mathcal{A}=\{a_1, \ldots, a_N\}$. Let $U(a_0) = U_1$ and define the following sets iteratively by 
\[
U(a_{i}) = \{ \rho' \in U(a_{i-1}) \mid a_{i} \text{ is represented by a saddle connection in } \rho'\}
\]
or $U(a_{i}) = U(a_{i-1})$ if the above set is empty. The terminal set $U(a_N)$ is open and nonempty. Call the subset of $\mathcal{A}$ that is represented by a saddle connection $X$. By construction, all saddle connections $X$ remain saddle connections in $U(a_N)$

For every two saddle connections $s_i$, $s_j$ in $X$, consider the set $NP_{i,j}$ which is the subset of $U(a_N)$ on which $s_i$ and $s_j$ are not parallel, and $P_{i,j}$ the subset on which they are parallel.  Using holonomy coordinates, we see that $P_{i,j}$ is a closed set defined by a single equation, hence $NP_{i,j}$ is open and either dense or empty. Define $E_{i,j}$ to be $NP_{i,j}$ if it is nonempty, and $P_{i,j}$ otherwise (in which case $P_{i,j}$ is all of $U(a_N)$). Then because $X$ is finite, it follows that the intersection of all $E_{i,j}$ is an open dense subset of $U(a_N)$, and we denote this $V$.

Now pick any metric $\rho'$ in $V$. The geodesic representative of $c$ in $\rho'$ is a concatenation of saddle connections of length at most $r$, which therefore come from $X$.  Any two consecutive saddle connections are either nonparallel, and remain so in $V$, or are parallel and remain so in $V$.  It follows that the geodesic representative of $c$ is a concatenation of the same set of saddle connections for every metric $\rho''$ in $V$. Therefore $c$ is a stable closed curve in an open dense subset of $\mathsf{Flat}(S,\alpha)$.
\end{proof}

From Proposition~\ref{stable}, we obtain the following corollary immediately.

\begin{cor}\label{lengsmoo}
Let $U$ be an arbitrary open set in $\mathsf{Flat}(S,\alpha)$.  For any closed curve $c\in\mathcal{C}(S)$ there exists an open nonempty subset $V\subset U$ such that $\ell(c,\cdot)$ is a smooth function over $V$.
\end{cor}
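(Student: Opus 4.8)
The plan is to deduce this directly from Proposition~\ref{stable} together with the explicit form of the length function in holonomy coordinates, using that smoothness is a local condition. First I would apply Proposition~\ref{stable} to obtain an open dense subset $D\subset\mathsf{Flat}(S,\alpha)$ on which the geodesic representative of $c$ is stable; since $U$ is open and nonempty, $D\cap U$ is open and nonempty, and I pick $\rho_0\in D\cap U$ together with a holonomy chart $\varphi$ about $\rho_0$. Setting $V=(\text{domain of }\varphi)\cap D\cap U$, this is an open neighborhood of $\rho_0$, hence nonempty, and it remains to show $\ell(c,\cdot)$ is smooth on $V$.

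By the description of geodesics in Section~\ref{qdfm} and by stability, for every $\rho\in V$ the geodesic representative of $c$ is a concatenation of a fixed finite list of saddle connections $s_1,\dots,s_p$ (counted with multiplicity) whose homotopy classes rel endpoints, and hence whose classes in $H_1(S,C;\mathbb{C})$, do not depend on $\rho$. Thus $\ell(c,\rho)=\sum_{j=1}^p\ell(s_j,\rho)$, where $\ell(s_j,\rho)$ is the Euclidean length of the straight segment in the class of $s_j$, so it suffices to show each $\rho\mapsto\ell(s_j,\rho)$ is smooth on $V$. When $\varepsilon=1$, writing $q=\omega^2$ locally, $\ell(s_j,\rho)$ is the modulus of the period $\int_{s_j}\omega$, which, since the holonomy coordinates are the periods of $\omega$ against a fixed homology basis and the class of $s_j$ is constant on $V$, equals $|L_j\circ\varphi(\rho)|$ for a fixed $\mathbb{C}$-linear functional $L_j$. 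When $\varepsilon=-1$, I pass to the canonical double cover $DS$: the saddle connection $s_j$ lifts to one of the same length, and because the square root of the induced quadratic differential is anti-invariant under the deck involution, its period depends only on the projection of the lift's homology class to the $(-1)$-eigenspace; since the holonomy coordinates on $W_q$ (and hence on $\mathsf{Flat}(S,\alpha)$) are precisely periods against a basis of that eigenspace, again $\ell(s_j,\rho)=|L_j\circ\varphi(\rho)|$ for a fixed $\mathbb{C}$-linear functional $L_j$.

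In either case $L_j\circ\varphi$ is nonvanishing on $V$, since $s_j$ is realized by an honest saddle connection of positive length at every point of $V$; because $z\mapsto|z|$ is real-analytic, in particular $C^\infty$, on $\mathbb{C}\smallsetminus\{0\}$, each $\ell(s_j,\cdot)$ is smooth on $V$, and therefore so is the finite sum $\ell(c,\cdot)$. I expect the only real subtlety to be the bookkeeping in the $\varepsilon=-1$ case --- verifying that a saddle connection length downstairs is still the modulus of a linear functional of the $(-1)$-eigenspace coordinates --- but this follows from anti-invariance of the square-root one-form exactly as in the construction of holonomy coordinates in Section~\ref{qdhc}; the rest is immediate.
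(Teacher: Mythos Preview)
Your argument is correct and is essentially what the paper intends: the paper states the corollary ``follows immediately'' from Proposition~\ref{stable} with no further proof, and you have simply spelled out the implicit details---stability fixes the saddle connections, each saddle connection length is the modulus of a linear functional in holonomy coordinates (via periods of the square-root one-form, lifted to the double cover when $\varepsilon=-1$), and the modulus is smooth off the zero set. There is no substantive difference in approach.
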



\section{Measured Foliations and Train Tracks}\label{mftt}

In this section we briefly describe measured foliations and train tracks on a surface $S$ of finite type. For detailed and complete descriptions, see \cites{FLP,PH,T}.

A \emph{measured foliation} on $S$ is a singular foliation on $S$ together with a transverse measure $\mu$ of full support without atoms that is invariant under holonomy. The singularities all have negative index except possibly marked points which are allowed to have index 1/2 (see \cite{FLP}). There exist charts to $\mathbb{R}^2$ away from singularities so that horizontal lines describe the foliation, and the transverse measure is given by $|dy|$. We use $\mathcal{MF}(S)$ to denote the set of equivalence classes of measured foliations and $\mathcal{PMF}(S)$ for the set of projective classes of measured foliations.

A quadratic differential on $S$ induces a \emph{vertical measured foliation} on $S$. The vertical measured foliation is obtained by foliating $\mathbb{R}^2$ with vertical lines and we use the semi-translation structure to obtain a measured foliation on the surface $S$. 

Let $\varphi: QD(S) \to \mathcal{MF}(S)$ denote the map that sends a quadratic differential $q$ to the vertical measured foliation. We can associate a circle of measured foliations to $\rho \in \mathsf{Flat}(S)$ induced by $q \in QD(S)$ by considering the circle of all quadratic differentials that induce $\rho$. Define the function 
\[
\nu_{q} : [0,\pi) \to \mathcal{MF}(S)
\] 
where 
\[
\nu_{q}(\theta) = \varphi(e^{2i\theta}q).
\]
A key ingredient of our proof of the main theorem is \cite{DLR}*{Lemma~9}. The lemma provides a formula to compute the length of a closed curve with respect to $\rho \in \mathsf{Flat}(S)$ using the circle of measured foliations associated to $\rho$. We state the lemma here for later reference.

\begin{lemma}\label{ellq}
For all $\rho \in \mathsf{Flat}(S)$ induced by $q \in QD(S)$ and $c \in \mathcal{C}(S)$, we have
\[
\ell(c,\rho) = \frac{1}{2}\int_0^\pi i(\nu_q(\theta),c) d\theta.
\]
\end{lemma}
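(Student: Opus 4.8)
The plan is to reduce the statement to a local Euclidean computation together with the standard fact that the flat geodesic representative of a curve realizes its intersection number with the vertical foliation of the ambient quadratic differential.

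First I would record the elementary but crucial observation that multiplying $q$ by the unit complex number $e^{2i\theta}$ does not change the induced flat metric $\rho$: it only rotates the natural coordinates $z \mapsto e^{i\theta}z$. Hence the $\rho$-geodesic representative $\gamma$ of $c$ — which exists by the discussion in Section~\ref{qdfm}, possibly running through cone points — is \emph{simultaneously} the flat geodesic representative of $c$ with respect to every $e^{2i\theta}q$, $\theta \in [0,\pi)$. Next I would invoke the standard fact (see \cite{FLP}, and \cite{S}*{Section~8}) that for any quadratic differential $q'$ the flat geodesic representative of $c$ is quasi-transverse to the vertical foliation $\varphi(q')$: each of its saddle connections is either a leaf of $\varphi(q')$ or crosses the leaves monotonically, and the angle condition (angles $\geq\pi$ on both sides at the finitely many cone points met by $\gamma$) rules out backtracking. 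A quasi-transverse representative realizes the intersection number, so for all but finitely many $\theta$ one gets
\[
i(\nu_q(\theta), c) = \mu_\theta(\gamma),
\]
where $\mu_\theta(\gamma)$ denotes the transverse $\nu_q(\theta)$-measure integrated along $\gamma$; since both sides of the desired identity are integrals in $\theta$, discarding the finitely many degenerate directions is harmless.

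Then I would compute $\mu_\theta(\gamma)$ locally. Away from $C(q)$, in a natural coordinate $z = x+iy$ for $q$, the foliation $\nu_q(\theta) = \varphi(e^{2i\theta}q)$ has leaves $\{\operatorname{Re}(e^{i\theta}z) = \mathrm{const}\}$ and transverse measure $|\operatorname{Re}(e^{i\theta}dz)| = |\cos\theta\, dx - \sin\theta\, dy|$. Parametrizing $\gamma$ by Euclidean arc length with unit tangent in direction $\phi(t)$, the contribution to $\mu_\theta(\gamma)$ is $|\cos(\theta+\phi(t))|\,dt$; since $\gamma$ meets $C(q)$ in a set of measure zero this computes $\mu_\theta(\gamma)$. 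Using Tonelli (the integrand is nonnegative, and in fact continuous in $\theta$) together with the identity $\int_0^\pi |\cos(\theta+\phi)|\,d\theta = 2$, valid for every $\phi$ because $|\cos|$ has period $\pi$, I obtain
\[
\int_0^\pi i(\nu_q(\theta), c)\,d\theta = \int_0^\pi \mu_\theta(\gamma)\,d\theta = \int_\gamma \Big( \int_0^\pi |\cos(\theta+\phi(t))|\,d\theta \Big)\,dt = 2\,\ell_\rho(\gamma) = 2\,\ell(c,\rho),
\]
and dividing by $2$ gives the lemma.

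The step I expect to be the main obstacle is the second one: making precise that the flat geodesic is quasi-transverse to $\nu_q(\theta)$ for (almost) every $\theta$, in particular checking that the angle condition at cone points — including marked points/poles, where it takes the modified form described in Section~\ref{qdfm} — genuinely forces quasi-transversality there, and organizing the exceptional set of $\theta$ (those for which some saddle connection of $\gamma$ is a leaf of $\nu_q(\theta)$) so that it remains finite and can be ignored in the integral. Everything else is bookkeeping: the rotation invariance of $\rho$, Fubini/Tonelli, and the one-line trigonometric integral.
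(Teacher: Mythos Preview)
Your argument is correct and is essentially the standard Crofton-type computation for this formula. Note, however, that the paper does not supply its own proof of this lemma: it is quoted verbatim as \cite{DLR}*{Lemma~9} and stated without proof for later reference, so there is no in-paper argument to compare against.
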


One way to provide a local description of $\mathcal{MF}(S)$ is via train tracks. In this paper, a train track $\tau$ on $S$ is an embedded trivalent graph with the following extra structure. The edges (called branches) of $\tau$ are smoothly embedded and at each vertex (called a switch) of $\tau$, all adjacent branches have the same tangent line $L$. Furthermore, we require that there is at least one branch on each side of the switch and no component of $S\smallsetminus\tau$ is a nullgon, an unmarked monogon, or an unmarked bigon.

We say that a simple closed curve $c$ is \emph{carried} by a train track $\tau$ if there exists a differentiable map $f:S\to S$ homotopic to the identity rel marked points and $f\mid_c$ is an immersion to $\tau$. The definition extends to measured foliations. A train track $\tau$ \emph{carries} a measured foliation $\mu\in\mathcal{MF}(S)$ if there is a differentiable map $f:S\smallsetminus C_\mu \to S$, where $C_\mu$ is the set of singularities of $\mu$, homotopic to the identity while immersing every leaf of $\mu$ to $\tau$. Let $\mathcal{MF}_\tau(S)$ denote the set of measured foliations on $S$ carried by $\tau$.

A \emph{weight function} on a train track $\tau$ is an assignment of a nonnegative real number to each branch in such a way that the numbers satisfy the \emph{switch condition} at each switch: the sum of the weights on incoming branches equals the sum on outgoing branches. Let $W_\tau$ denote the set of weight functions on $\tau$. We call a function $w\in W_\tau$ a weight function. A simple closed curve $c$ or a measured foliation $\mu$ carried by a train track $\tau$ determines a weight function $w_c$ or $w_\mu$, respectively, on $\tau$.

Two simple closed curves $c_1$ and $c_2$ meet \emph{efficiently} if they meet transversely and there are no unmarked bigon components in $S \smallsetminus (c_1 \cup c_2)$.  In this case, $|c_1 \cap c_2|$ is minimal among all simple closed curves $c_1',c_2'$ homotopic to $c_1,c_2$, respectively. This number is called the \emph{geometric intersection number} and is denoted $i(c_1,c_2)$.

One can similarly define efficient intersection for a curve and a train track, or two train tracks; see \cite{PH}.  When a curve $c$ and train track $\tau$ intersect efficiently, the \emph{intersection number} between $c$ and a weight function $w \in W_\tau$, denoted $i(c,w)$, is the sum of weights of branches over all intersection points of $c$ and $\tau$.  If $c$ and $\tau$ meet efficiently, and $w_{c'}$ is the weight coming from a simple closed curve $c'$, then $i(c,w_{c'}) = i(c,c')$.  Similarly, the intersection number between a measured foliation $\mu$ and a curve $c$ can be defined, and if $\mu$ is carried by $\tau$, then $i(c,\mu) = i(c,w_{\mu})$.  If $\tau$ and $\tau'$ meet efficiently and $w \in W_\tau, w' \in W_{\tau'}$, then $i(w,w')$ is defined as a weighted sum over all intersection points of branches.  Moreover, if these weight functions correspond to curves or measured foliations, then this is the geometric intersection number of the associated objects.

We will construct a pair of train tracks $\tau$ and $\tau'$ meeting efficiently on a surface $S$ and use them throughout the rest of the paper. To describe these, fix a set of \emph{pants curves}, i.e., a maximal set of pairwise disjoint, pairwise non-homotopic simple closed curves on $S$. Let $\tau$ be a train track that contains the set of pants curves, along with the branches shown in Figure~\ref{track} on each pair of pants.

\begin{figure}[ht]
\begin{tikzpicture}[scale=0.15]

\draw (-18,1) arc (180:360:1);
\draw (-17,0) -- (-1,0);
\draw (0,1) arc (0:-180:1);
\draw (-18,1) -- (-18,5);
\draw (-18,5) arc (180:0:1);
\draw (-16,1) -- (-16,5);
\draw (0,1) -- (0,5);
\draw (0,5) arc (0:180:1);
\draw (-2,1) -- (-2,5);
\draw (-17,6) -- (-16,6);
\draw (-1,6) -- (-2,6);
\draw (-16,6) arc (270:360:4);
\draw (-2,6) arc (270:180:4);
\draw (-12,10) -- (-12,13);
\draw (-6,10) -- (-6,13);
\draw (-11,12) arc (270:90:1);
\draw (-7,12) arc (-90:90:1);
\draw (-11,12) -- (-7,12);
\draw (-11,14) -- (-7,14);
\draw [blue] (-10,11) -- (-10,5);
\draw [blue] (-8,11) -- (-8,5);
\draw [blue] (-10,5) arc (360:270:1);
\draw [blue] (-8,5) arc (180:270:1);
\draw [blue] (-11,4) -- (-15,4);
\draw [blue] (-7,4) -- (-3,4);
\draw [blue] (-3,2) -- (-15,2);
\draw [blue] (-3,2) arc (90:0:1);
\draw [blue] (-3,4) arc (90:0:1);
\draw [blue] (-15,2) arc (270:180:1);
\draw [blue] (-15,4) arc (270:180:1);
\draw [blue] (-8,11) arc (180:90:1);
\draw [blue] (-10,11) arc (180:90:1);

\draw (2,1) arc (180:360:1);
\draw (3,0) -- (19,0);
\draw (20,1) arc (0:-180:1);
\draw (2,1) -- (2,5);
\draw (2,5) arc (180:0:1);
\draw (4,1) -- (4,5);
\draw (20,1) -- (20,5);
\draw (20,5) arc (0:180:1);
\draw (18,1) -- (18,5);
\draw (3,6) -- (6.9,6);
\draw (19,6) -- (15.1,6);
\draw (6.9,6) arc (270:360:4);
\draw (15.1,6) arc (270:180:4);
\draw (11,10) circle (0.1);
\draw [blue] (17,2) -- (5,2);
\draw [blue] (5,2) arc (270:180:1);
\draw [blue] (17,2) arc (90:0:1);
\draw [blue] [dotted] (7,6) -- (14,6);
\draw [blue] (9,4) arc (270:180:2);
\draw [blue] (14,6) arc (180:270:2);
\draw [blue] (9,4) -- (17,4);
\draw [blue] (17,4) arc (90:0:1);

\draw (22,1) arc (180:360:1);
\draw (23,0) -- (36,0);
\draw (22,1) -- (22,5);
\draw (22,5) arc (180:0:1);
\draw (24,1) -- (24,5);
\draw (23,6) -- (36,6);
\draw (36,0.1) circle (0.1);
\draw (36,5.9) circle (0.1);
\draw (36,0.2) arc (270:180:2);
\draw (36,5.8) arc (90:180:2);
\draw (34,2.2) -- (34,3.8);
\draw [blue] (34,2) -- (25,2);
\draw [blue] [dotted] (34,2) arc (360:270:2);
\draw [blue] (32,0) arc (0:90:2);
\draw [blue] (25,2) arc (270:180:1);

\end{tikzpicture}
\caption{The train track $\tau$ on pairs of pants.}
\label{track}
\end{figure}

The train track $\tau$ has the property that every branch that is going toward a pants curve is turning to its right. We similarly construct $\tau'$ using the same pants curves, but using left turns going toward any pants curve; see Figure~\ref{track'}. 

\begin{figure}[ht]
\begin{tikzpicture}[scale=0.15]

\draw (-18,1) arc (180:360:1);
\draw (-17,0) -- (-1,0);
\draw (0,1) arc (0:-180:1);
\draw (-18,1) -- (-18,5);
\draw (-18,5) arc (180:0:1);
\draw (-16,1) -- (-16,5);
\draw (0,1) -- (0,5);
\draw (0,5) arc (0:180:1);
\draw (-2,1) -- (-2,5);
\draw (-17,6) -- (-16,6);
\draw (-1,6) -- (-2,6);
\draw (-16,6) arc (270:360:4);
\draw (-2,6) arc (270:180:4);
\draw (-12,10) -- (-12,13);
\draw (-6,10) -- (-6,13);
\draw (-11,12) arc (270:90:1);
\draw (-7,12) arc (-90:90:1);
\draw (-11,12) -- (-7,12);
\draw (-11,14) -- (-7,14);
\draw [blue] (-10,11) -- (-10,5);
\draw [blue] (-8,11) -- (-8,5);
\draw [blue] (-10,5) arc (360:270:1);
\draw [blue] (-8,5) arc (180:270:1);
\draw [blue] (-11,4) -- (-15,4);
\draw [blue] (-7,4) -- (-3,4);
\draw [blue] (-3,2) -- (-15,2);
\draw [blue] (-3,2) arc (-90:0:1);
\draw [blue] (-3,4) arc (-90:0:1);
\draw [blue] (-15,2) arc (90:180:1);
\draw [blue] (-15,4) arc (90:180:1);
\draw [blue] (-8,11) arc (0:90:1);
\draw [blue] (-10,11) arc (0:90:1);

\draw (2,1) arc (180:360:1);
\draw (3,0) -- (19,0);
\draw (20,1) arc (0:-180:1);
\draw (2,1) -- (2,5);
\draw (2,5) arc (180:0:1);
\draw (4,1) -- (4,5);
\draw (20,1) -- (20,5);
\draw (20,5) arc (0:180:1);
\draw (18,1) -- (18,5);
\draw (3,6) -- (6.9,6);
\draw (19,6) -- (15.1,6);
\draw (6.9,6) arc (270:360:4);
\draw (15.1,6) arc (270:180:4);
\draw (11,10) circle (0.1);
\draw [blue] (17,2) -- (5,2);
\draw [blue] (5,2) arc (90:180:1);
\draw [blue] (17,2) arc (-90:0:1);
\draw [blue] [dotted] (7,6) -- (14,6);
\draw [blue] (9,4) arc (270:180:2);
\draw [blue] (14,6) arc (180:270:2);
\draw [blue] (9,4) -- (17,4);
\draw [blue] (17,4) arc (-90:0:1);

\draw (22,1) arc (180:360:1);
\draw (23,0) -- (36,0);
\draw (22,1) -- (22,5);
\draw (22,5) arc (180:0:1);
\draw (24,1) -- (24,5);
\draw (23,6) -- (36,6);
\draw (36,0.1) circle (0.1);
\draw (36,5.9) circle (0.1);
\draw (36,0.2) arc (270:180:2);
\draw (36,5.8) arc (90:180:2);
\draw (34,2.2) -- (34,3.8);
\draw [blue] (34,2) -- (25,2);
\draw [blue] [dotted] (34,2) arc (360:270:2);
\draw [blue] (32,0) arc (0:90:2);
\draw [blue] (25,2) arc (90:180:1);

\end{tikzpicture}
\caption{The train track $\tau'$ on pairs of pants.}
\label{track'}
\end{figure}

The complement of $\tau$ (and likewise $\tau'$) consists of trigons and/or marked monogons. The pair $\tau$ and $\tau'$ are constructed to be \emph{maximal standard} train tracks as in \cite{PH}. By construction, $\tau$ and $\tau'$ satisfy the following properties.

\begin{enumerate}
\item By applying a suitable isotopy, we can assume that $\tau$ and $\tau'$ meet efficiently.
\item By \cite{PH}*{Section~1.7}, $\mathcal{MF}_\tau(S)$ and $\mathcal{MF}_{\tau'}(S)$ are both nonempty open sets homeomorphic to $W_{\tau}$ and $W_{\tau'}$ respectively. The homeomorphism is given by sending $\mu$ to the weight vector $w_\mu$ defined by the carrying.
\end{enumerate}

From the homeomorphism $W_\tau \to \mathcal{MF}_\tau(S)$, we see that any simple closed curve $c$ naturally gives rise to a measured foliation $\mu_c$ (there is also a direct construction of $\mu_c$ from $c$). This defines a map $\mathcal{S}(S) \to \mathcal{MF}(S)$.  Projectivizing produces an injection $\mathcal{S}(S) \to \mathcal{PMF}(S)$ onto a dense set, by a result of Thurston.

The homeomorphism between $\mathcal{MF}_\tau(S)$ and $W_{\tau}$ is used in the key step of the proof of Theorem~\ref{theorem:main11}. In the next proposition, we prove that there exists a set $\sigma$ of simple closed curves such that the intersection function 
\[
i(\sigma,\cdot) : W_\tau \to \mathbb{R}^\sigma \text{, where } i(\sigma,w) = \left( i(c,w) \right)_{c\in\sigma},
\]
will provide a global coordinate for $W_\tau$. The dimension of $\mathcal{MF}_\tau(S)$ is equal to $(6g+2n-6)$, which will be the same as the size of $\sigma$.

\begin{prop}\label{Wcoord}
Let $\tau$ be the train track constructed above. Then there exists a set of $(6g+2n-6)$ simple closed curves $\sigma$ such that $i(\sigma,\cdot):W_\tau \to \mathbb{R}^{\sigma}_{\geq 0}$ is an injective linear map. Consequently, there exists a linear inverse $A : \operatorname{Image}(i(\sigma,\cdot)) \to W_\tau$.
\end{prop}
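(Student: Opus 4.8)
The plan is to recognize each coordinate of $i(\sigma,\cdot)$ as a linear functional on the weight space, and then to choose $\sigma$ so that these functionals form a basis of the dual of the linear span of $W_\tau$; the inverse $A$ will then be the restriction of the inverse linear isomorphism.

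First I would record the linearity. For a simple closed curve $c$ put in efficient position with respect to $\tau$ (possible by \cite{PH}), the set $c\cap\tau$ is a fixed finite set of points, independent of the weight, and $i(c,w)$ is the sum of the weights of the branches through those points. Hence $i(c,\cdot)$ is the restriction to $W_\tau$ of a genuine linear functional $L_c$ on the ambient space $\mathbb{R}^{\mathrm{branches}}$, so it is linear on $W_\tau$ and nonnegative there. Let $V_\tau$ be the linear span of $W_\tau$. Since $W_\tau$ is a convex cone (closed under addition and nonnegative scaling), one has $V_\tau = W_\tau - W_\tau$, and $\dim V_\tau = \dim W_\tau = \dim\mathcal{MF}_\tau(S) = 6g+2n-6$ via the homeomorphism $W_\tau\cong\mathcal{MF}_\tau(S)$. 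Each $L_c$ restricts to a linear functional on $V_\tau$ agreeing with $i(c,\cdot)$ on $W_\tau$.

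The key step is to show that the functionals $\{\,L_c|_{V_\tau} : c\in\mathcal{S}(S)\,\}$ span $V_\tau^{*}$, equivalently that their common kernel in $V_\tau$ is trivial. Suppose $w\in V_\tau$ satisfies $i(c,w)=0$ for all $c\in\mathcal{S}(S)$. Write $w = w_1 - w_2$ with $w_1,w_2\in W_\tau$, and let $\mu_1,\mu_2\in\mathcal{MF}_\tau(S)$ be the measured foliations corresponding to $w_1,w_2$ under the homeomorphism of property~(2). Using that $i(c,\mu)=i(c,w_\mu)$ whenever $\mu$ is carried by $\tau$, we get $i(c,\mu_1) = i(c,w_1) = i(c,w_2) = i(c,\mu_2)$ for every simple closed curve $c$. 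By the classical fact that a measured foliation is determined by its intersection numbers with all simple closed curves (\cite{FLP}), $\mu_1=\mu_2$, hence $w_1=w_2$ and $w=0$. Therefore the functionals span $V_\tau^{*}$, and we may select $c_1,\dots,c_{6g+2n-6}\in\mathcal{S}(S)$ (each in efficient position with respect to $\tau$) so that $i(c_1,\cdot),\dots,i(c_{6g+2n-6},\cdot)$ form a basis of $V_\tau^{*}$; set $\sigma=\{c_1,\dots,c_{6g+2n-6}\}$.

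Finally, these $6g+2n-6$ independent functionals assemble into a linear isomorphism $\Phi\colon V_\tau\to\mathbb{R}^{\sigma}$ whose restriction to $W_\tau$ is exactly $i(\sigma,\cdot)$; this restriction is injective, and its image lies in $\mathbb{R}^{\sigma}_{\geq 0}$ since each coordinate is a nonnegative intersection number. Taking $A=\Phi^{-1}$ restricted to $\operatorname{Image}(i(\sigma,\cdot))$ gives the desired linear inverse. The only non-formal input is the spanning claim: the main obstacle is reducing signed weight vectors to honest measured foliations via $V_\tau=W_\tau-W_\tau$ and then invoking injectivity of the intersection pairing on $\mathcal{MF}(S)$; once that is in hand the rest is linear algebra. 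One could alternatively argue concretely by exhibiting, for the explicit $\tau$ of Figure~\ref{track}, curves ``dual'' to the branches, but the argument above has the advantage of being uniform in $S$ and $\tau$.
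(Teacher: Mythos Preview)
Your argument is correct and takes a genuinely different route from the paper. The paper's proof is entirely constructive: it sets $\sigma=\sigma_P\cup\sigma_T$, the $(3g+n-3)$ pants curves together with one transversal curve for each pants curve, and then verifies by a case analysis over the three types of pairs of pants that every branch weight of $w$ can be solved for as an explicit linear combination of the numbers $i(c,w)$, $c\in\sigma$, yielding explicit entries for the matrix $A$. Your proof is instead an abstract existence argument: linearity of each $i(c,\cdot)$ reduces the question to showing that the associated functionals span $V_\tau^{*}$, and you deduce this from the injectivity of $\mu\mapsto\big(i(c,\mu)\big)_{c\in\mathcal{S}(S)}$ on $\mathcal{MF}(S)$. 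The paper's approach produces concrete curves adapted to the pants decomposition and an explicit inverse; yours is shorter, needs no case analysis, and works uniformly for any maximal train track. Either suffices for the application in Theorem~\ref{theorem:main11}, which uses only the existence and linearity of $A$.

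One small point to tighten: your parenthetical that every simple closed curve can be put in efficient position with $\tau$ is not quite right---curves carried by $\tau$ (for instance the pants curves themselves, which the paper in fact uses) cannot be made transverse to $\tau$ without producing bigons, so your ``sum of weights at crossings'' description of $L_c$ does not apply to them. The cleanest patch is to observe that $w\mapsto i(c,\mu_w)$ is linear on $W_\tau$ for \emph{every} simple closed curve $c$, not only those meeting $\tau$ efficiently: realizing $\mu_w$ in a fibered neighborhood of $\tau$, the intersection number $i(c,\mu_w)$ is the total $\mu_w$-transverse measure of $c$, which is additive in the transverse measure and hence linear in $w$. With linearity established for all $c$, your appeal to \cite{FLP} applies verbatim and the rest of the argument stands unchanged.
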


\begin{proof}
We will find $\sigma$ so that for any $w \in W_\tau$ we can solve for the weight assigned to each branch by the vector $i(\sigma,w)$. The first $(3g+n-3)$ curves of our set $\sigma$ will be the pants curves, which we denote $\sigma_P$. To determine the weights on the branches interior to the pairs of pants, we divide the analysis into three cases according to Figure~\ref{track}.

Suppose $c_1,c_2,c_3 \in \sigma_P$ are three pants curves bounding a single pair of pants without any marked points. We can see that $i(c_j,w)$ is the sum of the weights on two of the branches interior to the pair of pants. Therefore by using the three pants curves, we obtain the weights on three branches to be of the form 
\[
\frac{1}{2}(i(c_{j_1},w)+i(c_{j_2},w)-i(c_{j_3},w)) \text{ where } \{j_1, j_2, j_3\} = \{1, 2, 3\}.
\]

For the case of a pair of pants with one marked point, we only get two pants curves $c_1$ and $c_2$. Let us assume that in Figure~\ref{track}, $c_1$ is on the left and $c_2$ is on the right. We see that $i(c_1,w)$ is equal to the weight on the branch that connects the two pants curves. The weight on the branch that wrapped around the marked point is half the weight of the branch connecting it to $c_2$, which is
\[
i(c_2,w) - i(c_1,w).
\]

The last case is a pair of pants with two marked points which has only one pants curve $c_1$. The weight on the branch that wrapped around the marked point is half the weight of the branch connecting it to $c_1$, which is just $i(c_1,w)$. Therefore with $(3g+n-3)$ curves we can find the weights of $w$ on the branches interior to each pair of pants.

Now we need to find another set of $(3g+n-3)$ simple closed curves to determine the weights on the branches of $\tau$ that are contained in the pants curves. We take a transverse simple closed curve for each pants curve, disjoint from any other pants curves, meeting $\tau$ efficiently; see Figure~\ref{transverse} for the situation when the pairs of pants contain no marked points.

\begin{figure}[ht]
\begin{tikzpicture}[scale=0.15]

\draw (11,-18) arc (-90:90:1);
\draw (12,-17) -- (12,-1);
\draw (11,0) arc (90:-90:1);
\draw (11,-18) -- (7,-18);
\draw (7,-18) arc (270:90:1);
\draw (11,-16) -- (7,-16);
\draw (11,0) -- (7,0);
\draw (7,0) arc (90:270:1);
\draw (11,-2) -- (7,-2);
\draw (6,-17) -- (6,-16);
\draw (6,-1) -- (6,-2);
\draw (6,-16) arc (0:90:4);
\draw (6,-2) arc (360:270:4);
\draw (2,-12) -- (0,-12);
\draw (2,-6) -- (0,-6);
\draw [line width=.5mm] (0,-12) -- (0,-6);
\draw (0,-5) node{$c'$};

\draw [blue] (0.8,-10) -- (7,-10);
\draw [blue] (1,-8) -- (7,-8);
\draw [blue] (7,-10) arc (90:0:1);
\draw [blue] (7,-8) arc (270:360:1);
\draw [blue] (8,-11) -- (8,-15);
\draw [blue] (8,-7) -- (8,-3);
\draw [blue] (10,-3) -- (10,-15);
\draw [blue] (10,-3) arc (180:90:1);
\draw [blue] (8,-3) arc (180:90:1);
\draw [blue] (10,-15) arc (360:270:1);
\draw [blue] (8,-15) arc (360:270:1);
\draw [blue] (1,-8) arc (270:180:1);
\draw [blue] (0.8,-10) arc (270:180:0.8);

\draw [red,line width=.5mm] (-11,-9) -- (11,-9);
\draw [red,line width=.5mm] (11,-9) arc (270:360:1);
\draw [red,dotted] (12,-8) arc (0:90:1);
\draw [red,dotted] (11,-7) -- (-11,-7);
\draw [red,dotted] (-11,-7) arc (90:180:1);
\draw [red,line width=.5mm] (-12,-8) arc (180:270:1);
\draw (13,-8) node{$c$};

\draw (-11,-18) arc (270:90:1);
\draw (-12,-17) -- (-12,-1);
\draw (-11,0) arc (90:270:1);
\draw (-11,-18) -- (-7,-18);
\draw (-7,-18) arc (-90:90:1);
\draw (-11,-16) -- (-7,-16);
\draw (-11,0) -- (-7,0);
\draw (-7,0) arc (90:-90:1);
\draw (-11,-2) -- (-7,-2);
\draw (-6,-17) -- (-6,-16);
\draw (-6,-1) -- (-6,-2);
\draw (-6,-16) arc (180:90:4);
\draw (-6,-2) arc (180:270:4);
\draw (-2,-12) -- (0,-12);
\draw (-2,-6) -- (0,-6);
\draw (0,-12) -- (0,-6);
\draw [blue] (-1,-10) -- (-7,-10);
\draw [blue] (-0.8,-8) -- (-7,-8);
\draw [blue] (-7,-10) arc (90:180:1);
\draw [blue] (-7,-8) arc (270:180:1);
\draw [blue] (-8,-11) -- (-8,-15);
\draw [blue] (-8,-7) -- (-8,-3);
\draw [blue] (-10,-3) -- (-10,-15);
\draw [blue] (-10,-3) arc (180:90:1);
\draw [blue] (-8,-3) arc (180:90:1);
\draw [blue] (-10,-15) arc (0:-90:1);
\draw [blue] (-8,-15) arc (0:-90:1);
\draw [blue] (-0.8,-8) arc (90:0:0.8);
\draw [blue] (-1,-10) arc (90:0:1);

\draw (17,-7) -- (17,-11);
\draw (19,-7) -- (19,-11);
\draw (17,-7) arc (180:0:1);
\draw (19,-11) arc (360:180:1);
\draw (18,-6) -- (20,-6);
\draw (18,-12) -- (20,-12);
\draw (20,-6) arc (270:360:2);
\draw (20,-12) arc (90:0:2);
\draw (22,-4) arc (180:90:4);
\draw (22,-14) arc (180:270:4);
\draw (26,0) -- (30,0);
\draw (26,-18) -- (30,-18);
\draw (30,0) arc (90:0:4);
\draw (30,-18) arc (270:360:4);
\draw (34,-4) -- (34,-14);
\draw [line width=.5mm] (34,-9) node[right]{$c'$} -- (30,-9);
\draw (30,-7) arc (0:180:2);
\draw (30,-7) -- (30,-11);
\draw (26,-7) -- (26,-11);
\draw (26,-11) arc (180:360:2);
\draw [blue] (31,-8.2) arc (360:270:0.8);
\draw [blue] (31,-8.2) -- (31,-7);
\draw [blue] (31,-7) arc (0:180:3);
\draw [blue] (25,-7) -- (25,-11);
\draw [blue] (25,-11) arc (180:360:3);
\draw [blue] (31,-11) -- (31,-9.8);
\draw [blue] (31,-9.8) arc (180:90:0.8);
\draw [red,line width=.5mm] (32,-7) -- (32,-11);
\draw [red,line width=.5mm] (32,-7) arc (0:180:4);
\draw [red,line width=.5mm] (24,-7) -- (24,-11);
\draw [red,line width=.5mm] (24,-11) arc (180:360:4);
\draw (23,-9) node{$c$};
\draw [blue] (33,-8.2) arc (360:270:0.8);
\draw [blue] (33,-8.2) -- (33,-6);
\draw [blue] (33,-6) arc (0:180:5);
\draw [blue] (23,-12) arc (180:360:5);
\draw [blue] (33,-12) -- (33,-9.8);
\draw [blue] (33,-9.8) arc (180:90:0.8);
\draw [blue] (23,-6) arc (360:270:2);
\draw [blue] (23,-12) arc (0:90:2);
\draw [blue] (21,-8) -- (20,-8);
\draw [blue] (21,-10) -- (20,-10);
\draw [blue] (20,-8) arc (270:180:1);
\draw [blue] (20,-10) arc (270:180:1);

\end{tikzpicture}
\caption{Examples of transverse curves.}
\label{transverse}
\end{figure}
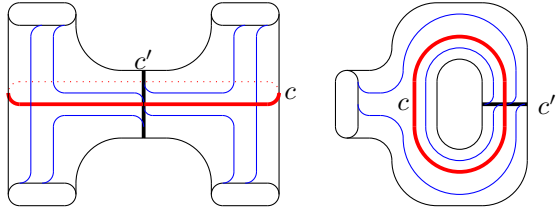

Let $c$ be such a transverse simple closed curve for the pants curve $c'$. We can use $i(c,w)$ to find the weights on branches on the pants curve, which are all expressed by $i(c,w)$ combined with constants determined by $i(\sigma_P,w)$. In Figure~\ref{eqns}, $w_5,\ldots,w_{10}$ had been determined by $\sigma_P$, and the equations given by switch conditions allow us to solve for $w_1,\ldots,w_4$ from these and $i(c,w)$. Explicitly we have
\begin{align*}
w_1 &=\frac{1}{2}(i(c,w)+w_5-w_6-w_9-w_{10}),\\
w_2 &=\frac{1}{2}(i(c,w)-w_5-w_6-w_9-w_{10}),\\
w_3 &=\frac{1}{2}(i(c,w)-w_5+w_6-w_9-w_{10}),\\
w_4 &=\frac{1}{2}(i(c,w)-w_7-w_8-w_9-w_{10}).
\end{align*}
This is true in general. For any transverse simple closed curve $c$ of the pants curve $c'$, let $w'$ be the weight on a branch on $c'$, then
\[
w' = \frac{1}{i(c,c')}\left(i(c,w) + L(i(\sigma_P,w))\right),
\]
where $i(c,c')=1$ or $2$ is the geometric intersection number and $L$ is a linear function from $\mathbb{R}^{3g+n-3}$ to $\mathbb{R}$. This completes the proof of injectivity since $w\in W_\tau$ is uniquely determined by the intersection of $w$ with the set of $(6g+2n-6)$ simple closed curves.

\begin{figure}
\begin{tikzpicture}[scale=0.5]

\draw (3,-12) -- (-3,-12);
\draw (3,-6) -- (-3,-6);
\draw (0,-6) node[above]{$c'$};
\draw [blue] (0.8,-10) -- (2,-10) node[right]{$w_7$};
\draw [blue] (1,-8) -- (2,-8) node[right]{$w_5$};
\draw [blue] (1,-8) arc (270:180:1);
\draw [blue] (0.8,-10) arc (270:180:0.8);
\draw [blue] (-3.8,-7) -- (-3.8,-11) node[below]{$w_{10}$};
\draw [blue] (3.8,-7) -- (3.8,-11) node[below]{$w_9$};
\draw [red] (-4.5,-9.25) -- (4.5,-9) node[right]{$c$};
\draw [red,dotted] (-4.5,-8.75) -- (4.5,-8.5);
\draw (0,-11.5) node[left]{$w_1$} -- (0,-10.3) node[right]{$w_4$} -- (0,-9) node[left]{$w_3$} -- (0,-7.7) node[left]{$w_2$} -- (0,-6.5) node[left]{$w_1$};
\draw (0,-11.5) arc (180:270:0.5);
\draw (0,-6.5) arc (180:90:0.5);
\draw [dotted] (1,-11.5) arc (360:270:0.5);
\draw [dotted] (1,-6.5) arc (0:90:0.5);
\draw [dotted] (1,-6.5) -- (1,-11.5);
\draw [blue] (-1,-10) -- (-2,-10) node[left]{$w_8$};
\draw [blue] (-0.8,-8) -- (-2,-8) node[left]{$w_6$};
\draw [blue] (-0.8,-8) arc (90:0:0.8);
\draw [blue] (-1,-10) arc (90:0:1);

\draw (8,-7) node{$w_1=w_2+w_5$};
\draw (8,-8) node{$w_3=w_2+w_6$};
\draw (8,-9) node{$w_3=w_4+w_7$};
\draw (8,-10) node{$w_1=w_4+w_8$};
\draw (9.3,-11) node{$i(c,w)=w_1+w_3+w_9+w_{10}$};

\end{tikzpicture}
\caption{Explicit weights and equations near a pants curve.}
\label{eqns}
\end{figure}
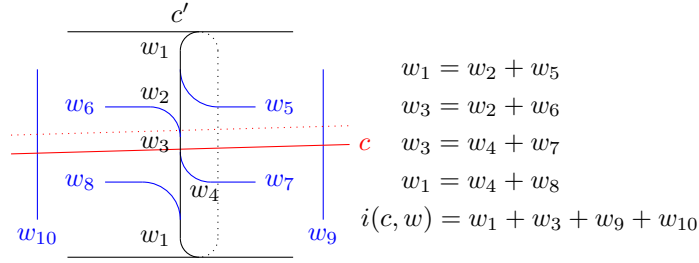

The set of $(3g+n-3)$ transverse curves $\sigma_T$ union with the $(3g+n-3)$ pants curves $\sigma_P$ will be our $\sigma$. We make a further remark that if $W_\tau$ is seen as $\mathbb{R}^{e(\tau)}$ where $e(\tau)$ is the number of branches of $\tau$, then there exists an $e(\tau)$ by $(6g+2n-6)$ matrix $A$ that maps $i(\sigma,w)$ to $w\in W_\tau$.
\end{proof}

We wrap up this section by considering the action of the mapping class group on measured foliations and train tracks. Pseudo-Anosov elements of the mapping class group act with \emph{north-south dynamics} on $\mathcal{PMF}(S)$. That is, there is a single attracting and a single repelling fixed point, and on the complement of the latter, iteration of the pseudo-Anosov element converges uniformly on compact sets to the former. Moreover, the set of attracting/repelling pairs for pseudo-Anosov elements are dense in $\mathcal{PMF}(S) \times \mathcal{PMF}(S)$. Therefore we have the following proposition.

\begin{prop}\label{Mod}
Let $\tau$ be the train track constructed above. If $K\subset\mathcal{PMF}(S)$ is disjoint from an open set in $\mathcal{PMF}(S)$, then there exists a mapping class $h$ such that $K\subset \mathcal{PMF}_{h(\tau)}(S)$. The same is true for $\tau'$.
\end{prop}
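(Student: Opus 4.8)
The plan is to use the north--south dynamics of a carefully chosen pseudo-Anosov element to drag $K$ into the open set $\mathcal{PMF}_\tau(S)$. The first step is the identity $\mathcal{PMF}_{h(\tau)}(S) = h(\mathcal{PMF}_\tau(S))$, valid for any mapping class $h$: a measured foliation $\mu$ is carried by $h(\tau)$ exactly when $h^{-1}(\mu)$ is carried by $\tau$, since carrying is witnessed by a differentiable map homotopic to the identity and this property is transported by the homeomorphism $h$. So it suffices to produce $h$ with $h^{-1}(K) \subset \mathcal{PMF}_\tau(S)$. The second step is to record two facts: (i) $\mathcal{PMF}_\tau(S)$ is a nonempty open subset of $\mathcal{PMF}(S)$, because $\mathcal{MF}_\tau(S)$ is nonempty, open, and a cone (scaling a weight function scales the carried foliation), hence descends to a nonempty open set after projectivizing; and (ii) since $\mathcal{PMF}(S)$ is compact and $K$ is disjoint from an open set $V$, the set $K' := \mathcal{PMF}(S)\smallsetminus V$ is compact, contains $K$, and is disjoint from $V$.

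Next I would choose the pseudo-Anosov. The product $\mathcal{PMF}_\tau(S) \times V$ is a nonempty open subset of $\mathcal{PMF}(S) \times \mathcal{PMF}(S)$, so by the density of attracting/repelling pairs of pseudo-Anosov elements there is a pseudo-Anosov $g$ whose attracting fixed point $\mu^+$ lies in $\mathcal{PMF}_\tau(S)$ and whose repelling fixed point $\mu^-$ lies in $V$. Since $K'$ is a compact set not containing $\mu^-$, north--south dynamics gives that $g^n|_{K'}$ converges uniformly to the constant $\mu^+$; as $\mathcal{PMF}_\tau(S)$ is an open neighborhood of $\mu^+$, for all sufficiently large $n$ we get $g^n(K') \subset \mathcal{PMF}_\tau(S)$, and a fortiori $g^n(K) \subset \mathcal{PMF}_\tau(S)$. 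Setting $h = g^{-n}$ then gives $K \subset h(\mathcal{PMF}_\tau(S)) = \mathcal{PMF}_{h(\tau)}(S)$. The identical argument applies verbatim to $\tau'$, using that $\mathcal{PMF}_{\tau'}(S)$ is also nonempty and open.

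I do not anticipate a serious obstacle here; the argument is short once the right facts are in place. The only points needing care are checking that openness is used at the level of $\mathcal{PMF}(S)$ rather than merely at the level of $\mathcal{MF}(S)$, and invoking the north--south convergence only on a compact set that avoids the repelling fixed point --- which is exactly why I pass from $K$ to $K' = \mathcal{PMF}(S)\smallsetminus V$ and require $\mu^- \in V$.
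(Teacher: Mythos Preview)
Your argument is correct and is exactly the approach the paper takes: the paper simply invokes north--south dynamics of pseudo-Anosov elements together with the density of attracting/repelling pairs in $\mathcal{PMF}(S)\times\mathcal{PMF}(S)$, and states the proposition as a consequence. Your write-up merely fills in the details (the identity $\mathcal{PMF}_{h(\tau)}(S)=h(\mathcal{PMF}_\tau(S))$, the passage to the compact complement $K'$, and the choice $h=g^{-n}$) that the paper leaves implicit.
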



\section{Deformation Families of Constant Length Spectrum}\label{thm1}

We will now prove our main theorem.

\newtheorem*{theorem:main11}{Theorem~\ref{theorem:main11}} 
\begin{theorem:main11}
Let $\alpha = (\alpha_1,\dots,\alpha_{k} ; \varepsilon)$ and $(2k_0-2g+\varepsilon+1) > 0$, and suppose $\Sigma$ is a set of simple closed curves with $\overline{\Sigma}\neq\mathcal{PMF}(S)$. Then there exists a deformation family $\Omega_\Sigma\subset \mathsf{Flat}(S,\alpha)$ such that the length function $\lambda_\Sigma$ is constant on $\Omega_\Sigma$ and $\dim(\Omega_\Sigma) \geq (2k_0-2g+\varepsilon+1)$. Consequently, there exists a deformation family $\Omega_\Sigma \subset \mathsf{Flat}(S)$ of dimension at least $(6g+2n-8)$.
\end{theorem:main11}

\begin{proof}
We consider $\tau'$ constructed in Section~\ref{mftt}. Since $\overline{\Sigma} \neq \mathcal{PMF}(S)$, the set $\Sigma$ is disjoint from some open set in $\mathcal{PMF}(S)$. By Proposition~\ref{Mod}, there exists a mapping class $h_1$ such that $h_1(\tau')$ is a train track that carries $\Sigma$. By replacing $\tau'$ and $\tau$ with $h_1(\tau')$ and $h_1(\tau)$, we may assume that $\Sigma$ is carried by $\tau'$.

Denote the natural projection from $QD(S,\alpha)$ to $\mathsf{Flat}(S,\alpha)$ by 
\[
p : QD(S,\alpha) \to \mathsf{Flat}(S,\alpha).
\] 
We consider an arbitrary $\rho\in \mathsf{Flat}(S,\alpha)$ with $\hat{\rho} \in p^{-1}(\rho) \subset QD(S,\alpha)$. From Section~\ref{qdhc} we know that there exist holonomy coordinates $U_{\hat{\rho}} \to \mathbb{C}^m$ about $\hat{\rho}$.

Recall the natural map $\varphi: QD(S) \to \mathcal{MF}(S)$ and the circle of measured foliations $\operatorname{Image}(\nu_{\hat{\rho}})$ associated to $\rho$ as described in Section~\ref{mftt}. Let $K$ be the union of $\nu_{q}(\theta)$ over all $\theta \in [0,\pi)$ and $q \in U_{\hat{\rho}}$. 

Observe that by taking $U_{\hat{\rho}}$ sufficiently small, $K$ is a small neighborhood of the circle $\nu_{\hat{\rho}}([0,\pi))$. In particular $K$ can be assumed disjoint from some open set when projected to $\mathcal{PMF}(S)$. By Proposition~\ref{Mod}, there exists a mapping class $h_2$ such that the train track $h_2(\tau)$ carries $K$. Thus, by replacing $\rho$ with $h_2^{-1}(\rho)$, we may assume that $K$ is carried by $\tau$.

Use $\mathcal{MF}^{[0,\pi)}(S)$ to denote the set of functions that map $[0,\pi)$ into $\mathcal{MF}(S)$. We define
\[
f_1 : U_{\hat{\rho}} \to\mathcal{MF}^{[0,\pi)}(S)
\]
by 
\[
f_1(q) = \nu_{q}.
\]
Let $F_1$ be the image of $f_1$.

We will consider the space $W_{\tau}$, which is the space of weight functions on $\tau$ as described in Section~\ref{mftt}. Let
\[
\psi : \mathcal{MF}_{\tau}(S) \to W_{\tau}
\]
be the homeomorphism between $\mathcal{MF}_{\tau}(S)$ and $W_{\tau}$. 

Let $\sigma = \{\sigma_1,\ldots,\sigma_{6g+2n-6}\}$ be the set of curves from Proposition~\ref{Wcoord} and write $(\mathbb{R}^\sigma)^{[0,\pi)}$ for the set of functions that map $[0,\pi)$ into $\mathbb{R}^\sigma$. We define
\[
f_2 : F_1 \to (\mathbb{R}^\sigma)^{[0,\pi)} 
\]
by
\[
f_2(\nu_{q})(\theta) = \{i(\sigma_j,\nu_{q}(\theta))\}_{\sigma_j \in \sigma}.
\]
Let $F_2$ be the image of $f_2$. 

Next we let $(W_{\tau})^{[0,\pi)}$ be the space of functions that map $[0,\pi)$ into $W_{\tau}$ and define
\[
f_3 : F_2 \to (W_{\tau})^{[0,\pi)}
\]
by 
\[
f_3(h)(\theta) = A(h(\theta))
\] 
for all $h\in F_2$ and $\theta\in [0,\pi)$, where $A$ is the linear map in Proposition~\ref{Wcoord}. Equivalently, this is determined by
\[
(f_3\circ f_2)(\nu_{q})(\theta) = \psi(\nu_{q}(\theta)).
\]
Let $F_3$ be the image of $f_3$. 

For any $q \in U_{\hat{\rho}}$, the function $(f_3\circ f_2\circ f_1)(q)(\theta)$ is uniformly continuous in $\theta \in [0,\pi)$. We define
\[
f_4 : F_3 \to W_{\tau}
\]
by applying the integral $\frac{1}{2}\int_0^\pi \cdot\, d\theta$ to functions in $F_3$. We will let 
\[
f=f_4\circ f_3\circ f_2\circ f_1 : U_{\hat{\rho}} \to W_\tau.
\]
Combining everything together we have the diagram below.
\[
\begin{tikzpicture}[scale=2]
 \node (A) at (0,1) {$U_{\hat{\rho}}$};
 \node (B) at (1,1) {$F_1$};
 \node (C) at (2,1) {$F_2$};
 \node (D) at (3,1) {$F_3$};
 \node (E) at (4,1) {$W_{\tau}$};
 \node (F) at (1,.5) {$\mathcal{MF}^{[0,\pi)}(S)$};
 \node (G) at (2,.5) {$(\mathbb{R}^\sigma)^{[0,\pi)}$};
 \node (H) at (3,.5) {$(W_{\tau})^{[0,\pi)}$};
 \node (I) at (1,.75) {$\bigcap$};
 \node (J) at (2,.75) {$\bigcap$};
 \node (K) at (3,.75) {$\bigcap$};
\path[->,font=\scriptsize,>=angle 90]
(A) edge node[above]{$f_1$} (B)
(B) edge node[above]{$f_2$} (C)
(C) edge node[above]{$f_3$} (D)
(D) edge node[above]{$f_4$} (E);
\end{tikzpicture}
\]

If we define
\[
f_5 : F_2 \to \mathbb{R}^\sigma
\]
by applying the integral $\frac{1}{2}\int_0^\pi \cdot\, d\theta$ to functions in $F_2$, then by Lemma~\ref{ellq} we have
\[
(\lambda_\sigma \circ p)(q) = \left\{\frac{1}{2}\int_0^\pi i(\sigma_j ,\nu_{q}(\theta))  d\theta \right\}_{\sigma_j \in \sigma} = (f_5 \circ f_2 \circ f_1) (q),
\]
where $\lambda_\sigma$ is the length function and $p$ is the projection from quadratic differentials to flat metrics. We obtain the following commutative diagram.
\[
\begin{tikzpicture}[scale=2]
 \node (A) at (0,1) {$U_{\hat{\rho}}$};
 \node (B) at (1,1) {$F_1$};
 \node (C) at (2,1) {$F_2$};
 \node (D) at (3,1) {$F_3$};
 \node (E) at (2,.25) {$\mathbb{R}^\sigma$};
 \node (F) at (3,.25) {$W_{\tau}$};
\path[->,font=\scriptsize,>=angle 90]
(A) edge node[above]{$f_1$} (B)
(B) edge node[above]{$f_2$} (C)
(C) edge node[above]{$f_3$} (D)
(C) edge node[right]{$f_5$} (E)
(D) edge node[right]{$f_4$} (F)
(A) edge node[below]{$\lambda_\sigma \circ p$} (E)
(E) edge node[above]{$A$} (F);
\end{tikzpicture}
\]
Therefore, we have $f = A\circ \lambda_\sigma \circ p$, where $A$ is the linear map in Proposition~\ref{Wcoord}. 

Recall that $\tau$ and $\tau'$ meet efficiently. For every $c \in \mathcal{S}(S)$ carried by $\tau'$ and $\mu \in \mathcal{MF}(S)$ carried by $\tau$, $i(c,\mu) = i(c,w_\mu)$ where $w_\mu$ is the weight on $\tau$ determined by $\mu$. Appealing to Lemma~\ref{ellq}, we have
\begin{align*}
\ell(\gamma, p(q)) &= \frac{1}{2}\int_0^\pi i(\gamma,\nu_q(\theta)) d\theta \\
&= \frac{1}{2}\int_0^\pi i(\gamma,w_{\nu_q(\theta)}) d\theta \\
&= \frac{1}{2}\int_0^\pi i(\gamma,(f_3\circ f_2\circ f_1)(q)) d\theta
\end{align*}
for any $\gamma \in \Sigma$ and $q \in U_{\hat{\rho}}$. We use the fact that the intersection number of $\gamma$ with any $w_\mu$ is linear in $W_\tau$. Hence
\begin{align*}
\ell(\gamma, p(q)) &= \frac{1}{2}\int_0^\pi i(\gamma,(f_3\circ f_2\circ f_1)(q)) d\theta\\
&=  i(\gamma,(f_4\circ f_3\circ f_2\circ f_1)(q)) \\
&= i(\gamma,f(q)).
\end{align*}
Therefore the length of $\gamma \in \Sigma$ with respect to any flat metric induced by $q \in U_{\hat{\rho}}$ is equal to the intersection number $i(\gamma,f(q))$.

From Proposition~\ref{dimcount} we know that $U_{\hat{\rho}}$ is $(4g+2k+\varepsilon-3)$-dimensional, where $k=k_0+n$ is the sum of the number of unmarked zeroes and marked points. From Section~\ref{mftt} we know that $W_{\tau}$ is a $(6g+2n-6)$-dimensional space. Applying Corollary~\ref{lengsmoo} to each $\sigma_j \in \sigma$, it follows that there exists an open set $V \subset p(U_{\hat{\rho}})$ so that $\lambda_\sigma \big|_V$ is a smooth function with respect to holonomy coordinates. Since $A$ is linear, we conclude that $f\big|_{p^{-1}(V)}$ is a smooth function. This implies that the generic fiber $\Omega \subset U_{\hat{\rho}}$ has dimension at least $(2k_0-2g+\varepsilon+3)$. On $\Omega$, $f$ is constant.

Let $\gamma\in\Sigma$, $\zeta\in p(\Omega)$, and $\hat{\zeta}\in \Omega$ be a preimage of $\zeta$. The length of $\gamma$ with respect to $\zeta$ is equal to $i(f(\hat{\zeta}),\gamma)$. The length of $\gamma$ with respect to metrics in $p(\Omega)$ is independent of the metric since $f\big|_{\Omega}$ is a constant function. Therefore $p(\Omega) \subset \mathsf{Flat}(S,\alpha)$ is a deformation family of constant $\Sigma$-length-spectrum. The dimension of $p(\Omega)$ is at least $\operatorname{dim}(\Omega)-2$, which is $(2k_0-2g+\varepsilon+1)$.

For the case of $\mathsf{Flat}(S)$, we maximize the dimension of the deformation family by maximizing $k_0$. Consider the stratum where $\alpha_1=\cdots=\alpha_{k_0}=1$ and $\alpha_{k_0+1}=\cdots=\alpha_k=-1$. The value of $k_0$ is equal to $(4g+n-4)$. Consequently, there exists a deformation family of dimension at least $(6g+2n-8)$. 
\end{proof}


\section{Local Spectral Rigidity for Closed Curves}\label{thm2}

In this section we prove that for any flat metric inside a fixed stratum, we can find a finite set of closed curves that satisfies local spectral rigidity. In fact, we will explicitly construct the locally spectally rigid set of closed curves $\Sigma$ by describing geodesic representatives of $\Sigma$.

\newtheorem*{theorem:main2}{Theorem~\ref{theorem:main2}} 
\begin{theorem:main2}
Let $\alpha = (\alpha_1,\dots,\alpha_{k} ; \varepsilon)$. For any $\rho\in \mathsf{Flat}(S,\alpha)$, there exists a set of closed curves $\Sigma\subset\mathcal{C}(S)$ such that $\Sigma$ is locally spectrally rigid at $\rho \in \mathsf{Flat}(S,\alpha)$ and $|\Sigma|\leq 15(2g+k-2)$.
\end{theorem:main2}

\begin{proof}
We take a maximal collection of saddle connections with pairwise disjoint interiors with respect to $\rho$. By \cite{Vo}, this will be a triangulation of $S$ by Euclidean triangles with each side of each triangle being a saddle connection. Locally, the set of lengths of saddle connections determines the metric, so it suffices to find curves whose lengths determine the lengths of these saddle connections.

We will now prove a lemma that allows us to construct geodesic segments, i.e., concatenations of saddle connections that satisfy the angle condition. Let $C$ be the set of cone points of $\rho$. A \emph{direction} at $a\in C$ is a geodesic segment with initial point $a$. Let $\angle_a(u_1,u_2)$ denote the smaller of the pair of angles between directions $u_1$ and $u_2$ at $a\in C$.

\begin{lemma}\label{direction}
Suppose we are given $\rho\in \mathsf{Flat}(S,\alpha)$, an initial point $a\in C$, a terminal point $b\in C$, a direction $u$ at $a$, and a direction $v$ at $b$. For any $\epsilon>0$ there exists a geodesic segment $\gamma$ from $a$ to $b$ such that 
\[
\angle_a(u,\gamma) < \epsilon \text{ and } \angle_b(v,\overline{\gamma})<\epsilon,
\]
where $\overline{\gamma}$ is $\gamma$ with orientation reversed.
\end{lemma}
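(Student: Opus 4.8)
\textbf{Proof proposal for Lemma~\ref{direction}.}

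The plan is to build the geodesic segment $\gamma$ from $a$ to $b$ as a long concatenation of saddle connections that spirals, accumulating angle, so that it can be made to leave $a$ in (almost) the direction $u$ and arrive at $b$ from (almost) the direction $\overline{v}$, all while satisfying the angle condition at every intermediate cone point. First I would reduce to the case where the surface minus $C$ is connected enough to move freely: pick the triangulation by saddle connections provided by \cite{Vo}, so that from any cone point and in any direction we can start traveling along edges of the triangulation. The key local move is the following: at a cone point $c$ of cone angle $(\alpha_c+2)\pi \geq 3\pi$, given an incoming saddle connection, there are at least two outgoing saddle connections of the triangulation on each side, and by choosing which one to continue along we can turn by an angle that is at least $\pi$ and at most (cone angle) $-\pi$ on a prescribed side; in particular we can always continue geodesically (angles $\geq \pi$ on both sides), and we have genuine freedom in the turning angle. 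Concatenating such moves, the set of total ``winding'' we can realize while returning near a given point is large.

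The main step is a spiralling/controllability argument. I would fix a cone point $c_0$ with cone angle $\geq 3\pi$ — here is where the hypothesis $(3g+n-3)\geq 2$, or more precisely the existence of some cone point with excess angle, is used; if all cone points were regular the surface would be a torus and excluded. Around $c_0$ one can construct a closed geodesic loop $\delta$ based near $c_0$, and by going around a small ``geodesic polygon'' near $c_0$ one produces, for each sufficiently small target angle $\theta$, a geodesic segment that returns to a neighborhood having rotated the direction by an amount in a set that is $\delta$-dense in the circle for $\delta$ as small as we like (take enough turns). The precise mechanism: first travel from $a$ along a geodesic segment leaving in direction within $\epsilon/3$ of $u$ until reaching some cone point $c_1$ (possible since directions that hit a cone point are dense, or one can perturb slightly and still stay within $\epsilon$); then insert a geodesic ``correction loop'' near $c_1$ (or near $c_0$, travelling to $c_0$ and back) that adjusts the outgoing direction; then travel from $c_1$ toward $b$, arriving at some incoming direction, and insert another correction loop near $b$ (or a cone point adjacent to $b$) to bring the arrival direction within $\epsilon/3$ of $\overline{v}$. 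Throughout, the angle condition is preserved because every turn we make is by an amount between $\pi$ and (cone angle)$-\pi$, which is exactly the geodesic condition; the correction loops are themselves geodesic by the same token.

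The hard part — and the step I expect to be the real obstacle — is the controllability: showing that the angular corrections available at a single cone point (or a bounded set of cone points near $a$, $b$, $c_0$) actually generate a dense set of rotations of the tangent direction, with the lengths staying finite. The clean way to handle this is to note that a cone point of angle $2\pi k + r$ with $k\geq 1$ admits a geodesic ``figure-eight'' or ``petal'' based there whose holonomy (the angle by which it rotates a transported direction) can be taken to be $r$ or any angle of the form (sum of turning angles) subject only to each summand lying in $[\pi, \text{angle}-\pi]$; iterating petals, the realizable holonomies form an additive sub-semigroup of $\mathbb{R}/(\text{angle})\mathbb{Z}$ that, because the interval $[\pi,\text{angle}-\pi]$ has positive length, is dense — indeed, already two incommensurable turning values, or simply the connectedness of the interval of available single-turn values, force density. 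One then chooses a finite concatenation of such petals realizing a holonomy within $\epsilon$ of the required correction, attaches it at the appropriate cone point, and checks the angle condition holds at the two new ``junction'' cone points where the petal is grafted onto the main path (this may require choosing the petal's initial and final saddle connections to lie in the correct angular sectors, which is again possible by the $\geq 3\pi$ cone angle). Assembling these three pieces — approach from $a$, correction, approach to $b$, correction — gives the desired $\gamma$, completing the proof.
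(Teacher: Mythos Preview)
Your approach diverges substantially from the paper's and has a real gap at exactly the point you flag as the obstacle. The ``petal'' construction is circular: producing a geodesic loop based at a cone point $c_0$ with prescribed initial and final directions (up to $\epsilon$) is the statement of the lemma in the special case $a=b=c_0$, so invoking such loops as a black box begs the question. Your density claim is also not what is needed: even granting that saddle-connection directions at $c_0$ are dense, what must be controlled is the \emph{pair} (initial direction, final direction) of a loop, not a single scalar ``holonomy'', and nothing in the sketch explains why these pairs fill out the product up to $\epsilon$. The reduction you outline (go from $a$ to some $c_1$ in a direction $\approx u$, insert a correction, then continue to $b$) does trade the $\epsilon$-constraint at $a$ for a constraint at $c_1$ with tolerance roughly $(\text{cone angle at }c_1-2\pi)/2 \ge \pi/2$, which is a genuine relaxation; but you still owe a geodesic from $c_1$ to some $c_2$ near $b$ landing in a prescribed angular sector at each end, and the sketch never supplies that base case.

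The paper avoids combinatorial assembly entirely. It uses minimality of the straight-line flow: perturb $u,v$ to nearby directions $u',v'$ whose rays from $a$ and $b$ are dense on $S$; follow the ray from $a$ until it crosses the ray from $b$ at a point $c$ far along both, then follow the second ray back to $b$, and straighten this broken path in its homotopy class. Control of the straightened geodesic comes from a Gauss--Bonnet argument in the universal cover --- the geodesic triangle $\hat a\hat b\hat c$ encloses no cone points --- together with the observation that once the rays are long enough there are cone points within angle $<\epsilon$ of each ray on both sides; these act as barriers forcing the straightened geodesic to leave $a$ and arrive at $b$ within $\epsilon$ of $u$ and $\overline v$. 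No loops with prescribed holonomy are ever needed.
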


\begin{proof}\label{segment}
There exists a direction $u'$ arbitrarily close to $u$ such that the ray from $a$ in $u'$ direction is minimal, that is, the ray is dense on $S$. Similarly there is a minimal ray beginning at $b$ in the a direction $v'$ arbitrarily close to $v$. We can pick $u'$ and $v'$ such that the two rays intersect infinitely many times.

Consider the curve that starts at $a$ and follows along the ray for a long time before hitting an intersection and following the other ray backwards for a long time before reaching $b$. Then the geodesic segment obtained by straightening satisfies the statement as long as we use an intersection that is far enough from both $a$ and $b$. Figure~\ref{univ} shows how the curve would look in the universal cover.

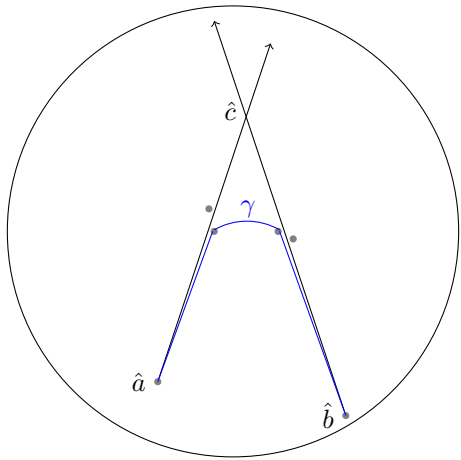
\begin{figure}[ht]
\begin{tikzpicture}
\draw (0,0) circle (3);
\draw [->] (-1,-2) node[left=1pt]{$\hat{a}$} -- (0.5,2.5);
\draw [->] (1.5,-2.45) node[left=1pt]{$\hat{b}$}-- (-0.25,2.8);
\draw (0.2,1.6) node[left=1pt]{$\hat{c}$};
\draw [blue] (0.2,0.3) node{$\gamma$};
\filldraw [gray] (-1,-2) circle (0.04);
\filldraw [gray] (1.5,-2.45) circle (0.04);
\filldraw [gray] (-0.25,0) circle (0.04);
\filldraw [gray] (-0.32,0.3) circle (0.04);
\filldraw [gray] (0.6,0) circle (0.04);
\filldraw [gray] (0.8,-0.1) circle (0.04);
\draw [blue] (-1,-2) -- (-0.27,0);
\draw [blue] (1.5,-2.45) -- (0.62,0);
\draw [blue] (-0.25,0.02) arc (120:60:.87);
\end{tikzpicture}
\caption{Visualizing the geodesic segment in the universal cover.}
\label{univ}
\end{figure}

On each side of the ray based at $a$, there will eventually be cone points at distance less than $\tan(\epsilon)$ from the ray, projecting to the ray at distance at least one from $a$. This means that these cone points are in a direction making angle less than $\epsilon$ with the ray. The same is true for the ray based at $b$. We pick an intersection point from the infinite set of intersections that is past four such cone points, i.e., so that the segments $\overline{ac}$ and $\overline{bc}$ have these nearby cone points on both sides.

Let $\hat{a}$, $\hat{b}$, and $\hat{c}$ in the universal cover of $S$ be lifts of $a$, $b$, and $c$ on $S$. We consider the geodesic triangle with vertices $\hat{a}$, $\hat{b}$, and $\hat{c}$ with the three sides being the two rays and the geodesic $\gamma$ connecting $\hat{a}$ and $\hat{b}$. By a Gauss-Bonnet Theorem argument (see \cite{R}*{Theorem~3.3}), the geodesic triangle does not contain the lift of any point in $C_0$ in the interior, where $C_0$ is the set of unmarked zeroes. The same is true for the marked points $P$ since $\gamma$ is homotopic to the concatenation of the other two sides of the triangle.

Therefore the four cone points that are of distance less than $\epsilon$ from the rays will act as barriers as we straighten to obtain $\gamma$. Hence the resulting geodesic segment $\gamma$ must satisfy the requirement: 
\[
\angle_a(u,\gamma) < \epsilon \text{ and } \angle_b(v,\overline{\gamma})<\epsilon,
\]
where $\overline{\gamma}$ is $\gamma$ with orientation reversed.
\end{proof}

Now we will describe how the length of a saddle connection can be determined in various cases. Let $\gamma_0$ be an oriented saddle connection from the triangulation and let its initial and terminal points be $a$ and $b$.

\noindent\textit{Case 1}: $a = b$ and $a$ is not a marked point. 

This means that the saddle connection itself is a closed geodesic. We know that when $\gamma_0$ is a closed curve, the length of $\gamma_0$ with respect to $\rho$ is well-defined. Pick an open neighborhood $U_\rho$ of $\rho$ small enough so that $\gamma_0$ as a closed curve is always a single saddle connection. This is possible since the condition is an open condition. Hence we add $\gamma_0$ to our set $\Sigma$. The length of $\gamma_0$ as a saddle connection is determined by the length of $\gamma_0$ as a closed curve.

\noindent\textit{Case 2}: $a \neq b$ and both $a$ and $b$ are not marked points. 

Fix a small $\epsilon>0$. Let $u_1,\ldots,u_4$ be directions at $a$ and $v_1,\ldots,v_4$ be directions at $b$ specified by the angles made with $\gamma_0$ (measured counterclockwise) according to the following table:
\[\begin{array} {clcl}
\mbox{direction at } a & \mbox{angle} & \mbox{vector at b } & \mbox{angle}\\
u_1 & \pi+2\epsilon & v_1 & -\pi-2\epsilon\\
u_2 & -\pi-2\epsilon & v_2 & \pi+2\epsilon\\
u_3 & \pi-4\epsilon & v_3 & -\pi+4\epsilon\\
u_4 & -\pi+4\epsilon & v_4 & \pi-4\epsilon\\
\end{array}\]

We apply Lemma~\ref{direction} to each pair $u_j$ and $v_j$ for $j=1,2,3,4$. We obtain geodesic segments $\gamma_j$ for $j=1,2,3,4$. Figure~\ref{case2} shows how everything fits together. 

\begin{figure}[ht]
\begin{tikzpicture}[scale=2]
 \draw (-1,0) node[below=2pt,fill=white]{$a$} node[above=18pt,fill=white]{$\pi$} node[below=16pt,fill=white]{$\pi$} node[left=20pt,fill=white]{$\geq\pi$  $\vdots$} -- node[midway]{$\gamma_0$} (1,0) node[below=2pt,fill=white]{$b$} node[above=18pt,fill=white]{$\pi$} node[below=16pt,fill=white]{$\pi$} node[right=20pt,fill=white]{$\vdots$  $\geq\pi$}; 
 \draw [<-] (-2,1) -- (-1,0); 
 \draw [<-] (-2,-1) -- (-1,0); 
 \draw [->] (1,0) -- (2,1); 
 \draw [->] (1,0) -- (2,-1); 
 \draw[red] (-1.8,1) node[right=2pt]{$\gamma_3$} -- (-1,0); 
 \draw[red] (-1.8,-1) node[right=2pt]{$\gamma_4$} -- (-1,0); 
 \draw[red] (1,0) -- (1.8,1) node[left=2pt]{$\gamma_3$}; 
 \draw[red] (1,0) -- (1.8,-1) node[left=2pt]{$\gamma_4$}; 
 \draw[blue] (-2.2,1) node[left=2pt]{$\gamma_1$} -- (-1,0); 
 \draw[blue] (-2.2,-1) node[left=2pt]{$\gamma_2$} -- (-1,0); 
 \draw[blue] (1,0) -- (2.2,1) node[right=2pt]{$\gamma_1$}; 
 \draw[blue] (1,0) -- (2.2,-1) node[right=2pt]{$\gamma_2$}; 

 \draw[red][<-] (-1.35,.5) node[right=1pt]{$u_3$} -- (-1,0); 
 \draw[red][<-] (-1.35,-.5) node[right=1pt]{$u_4$} -- (-1,0); 
 \draw[red][->] (1,0) -- (1.35,.5) node[left=1pt]{$v_3$}; 
 \draw[red][->] (1,0) -- (1.35,-.5) node[left=1pt]{$v_4$}; 
 \draw[blue][<-] (-1.65,.5) node[left=1pt]{$u_1$} -- (-1,0); 
 \draw[blue][<-] (-1.65,-.5) node[left=1pt]{$u_2$} -- (-1,0); 
 \draw[blue][->] (1,0) -- (1.65,.5) node[right=1pt]{$v_1$}; 
 \draw[blue][->] (1,0) -- (1.65,-.5) node[right=1pt]{$v_2$}; 
\draw (-1.3535,0.3535) arc (135:225:0.5);
 \draw (-0.65,0) arc (0:135:0.35);
 \draw (-0.7,0) arc (360:225:0.3);
 \draw (1.3535,0.3535) arc (45:-45:.5);
 \draw (0.65,0) arc (180:45:0.35);
 \draw (0.7,0) arc (180:315:0.3);
\end{tikzpicture}
\caption{Case 2.}
\label{case2}
\end{figure}

Add the following closed geodesics to $\Sigma$.
\[
c_1=\gamma_0\cup\gamma_1,\ c_2=\gamma_0\cup\gamma_2,\ c_3=\gamma_1\cup\gamma_4,\ c_4=\gamma_2\cup\gamma_3, \text{ and } c_5=\gamma_3\cup\gamma_4.
\]
They are all closed geodesics with respect to $\rho$ because the angle condition is satisfied by construction. Furthermore, by construction the angles between saddle connections meeting at $a$ or at $b$ are all strictly greater than $\pi$. Hence there exists an open neighborhood $U_\rho$ of $\rho$ small enough so that the angle condition at $a$ and $b$ are strictly satisfied with respect to any $\rho'$ in the neighborhood.

The length of $\gamma_0$ as a saddle connection is then described by 
\[
\frac{1}{2} [\ell(c_1,\rho') +\ell(c_2,\rho') -\ell(c_3,\rho') - \ell(c_4,\rho') + \ell(c_5,\rho')]
\]
for any $\rho'\in U_\rho$ because the geodesic representatives of $c_1$ and $c_2$ both always contain $\gamma_0$.

\noindent\textit{Case 3}: Both $a$ and $b$ are marked points. 

The argument is similar to Case 1. Consider a simple closed curve that encloses only $\gamma_0$ but no other cone points besides $a$ and $b$. The geodesic representative would be $\gamma_0\cup\overline{\gamma_0}$, where $\overline{\gamma_0}$ is $\gamma_0$ with orientation reversed. The length of $\gamma_0$ as a saddle connection is determined by the length of this closed geodesic by a factor of one half.

\noindent\textit{Case 4}: Only one of $a$ and $b$ is a marked point. 

Without loss of generality, let $b$ be the marked point. The setting is similar to Case 2 with directions specified by the angles made with $\gamma_0$ (measured counterclockwise). 
\[\begin{array} {cl}
\mbox{direction at } a & \mbox{angle}\\
u_1 & \pi+2\epsilon \\
u_2 & -\pi-2\epsilon \\
u_3 & \pi-4\epsilon \\
u_4 & -\pi+4\epsilon \\
\end{array}\]

We apply the lemma to the pair $u_1$ and $u_2$ to obtain geodesic segment $\gamma_1$. Use the pair $u_3$ and $u_4$ to obtain $\gamma_2$ as in Figure~\ref{case4}. 

\begin{figure}[ht]
\begin{tikzpicture}[scale=2]
 \draw (-1,0) node[below=2pt,fill=white]{$a$} node[above=18pt,fill=white]{$\pi$} node[below=16pt,fill=white]{$\pi$} node[left=20pt,fill=white]{$\geq\pi$  $\vdots$} -- node[midway]{$\gamma_0$} (1,0) node[below=2pt,fill=white]{$b$}; 
 \draw [<-] (-2,1) -- (-1,0); 
 \draw [<-] (-2,-1) -- (-1,0); 
 \draw[red] (-1.8,1) node[right=2pt]{$\gamma_2$} -- (-1,0); 
 \draw[red] (-1.8,-1) node[right=2pt]{$\gamma_2$} -- (-1,0); 
 \draw[blue] (-2.2,1) node[left=2pt]{$\gamma_1$} -- (-1,0); 
 \draw[blue] (-2.2,-1) node[left=2pt]{$\gamma_1$} -- (-1,0); 
 \draw[red][<-] (-1.35,.5) node[right=1pt]{$u_3$} -- (-1,0); 
 \draw[red][<-] (-1.35,-.5) node[right=1pt]{$u_4$} -- (-1,0); 
 \draw[blue][<-] (-1.65,.5) node[left=1pt]{$u_1$} -- (-1,0); 
 \draw[blue][<-] (-1.65,-.5) node[left=1pt]{$u_2$} -- (-1,0); 
  \draw (-1.3535,0.3535) arc (135:225:0.5);
 \draw (-0.65,0) arc (0:135:0.35);
 \draw (-0.7,0) arc (360:225:0.3);
 \draw (1,0) circle (.5pt); 
\end{tikzpicture}
\caption{Case 4.}
\label{case4}
\end{figure}

Add the following closed geodesics to $\Sigma$.
\[
c_1=\gamma_0\cup\gamma_1\cup\overline{\gamma_0},\ c_2=\gamma_1\cup\gamma_2, \text{ and } c_3=\gamma_2.
\]
Once again we can find $U_\rho$ where the angle condition at $a$ and at $b$ are both strict. By a similar argument, the length of $\gamma_0$ is described by 
\[
\frac{1}{2} [\ell(c_1,\rho') -\ell(c_2,\rho') + \ell(c_3,\rho')]
\]
for any $\rho'\in U_\rho$.

We constructed at most five closed geodesics for each saddle connection in the triangulation. One can compute the number of saddle connections in the triangulation, which is $(6g+3k-6)$. Hence $|\Sigma|\leq 5(6g+3k-6)$. Finally we pick $U_\rho$ that respects the triangulation and the expression of the length of each saddle connection by the lengths of closed geodesics. This means that the set of saddle connections for the triangulation form a triangulation for any $\rho'\in U_\rho$. The second part means that, for each saddle connection $\gamma_0$, the conclusion to the case analysis is true with respect to any $\rho'\in U_\rho$. Hence we are done.
\end{proof}


\begin{bibdiv}
\begin{biblist}
 
\bib{DLR}{article}{
   author={Duchin, Moon},
   author={Leininger, Christopher J.},
   author={Rafi, Kasra},
   title={Length spectra and degeneration of flat metrics},
   journal={Invent. Math.},
   volume={182},
   date={2010},
   number={2},
   pages={231--277},
   issn={0020-9910},
   review={\MR{2729268 (2011m:57022)}},
   doi={10.1007/s00222-010-0262-y},
}

\bib{FLP}{collection}{
   author={Fathi, A.},
   author={Laudenbach, F.},
   author={Po\'{e}naru, V.},
   title={Travaux de Thurston sur les surfaces},
   language={French},
   note={S\'eminaire Orsay;
   Reprint of {\it Travaux de Thurston sur les surfaces}, Soc.\ Math.\
   France, Paris, 1979 [ MR0568308 (82m:57003)];
   Ast\'erisque No. 66-67 (1991)},
   publisher={Soci\'et\'e Math\'ematique de France},
   place={Paris},
   date={1991},
   pages={1--286},
   issn={0303-1179},
   review={\MR{1134426 (92g:57001)}},
}

\bib{M}{article}{
   author={Masur, Howard},
   title={Interval exchange transformations and measured foliations},
   journal={Ann. of Math. (2)},
   volume={115},
   date={1982},
   number={1},
   pages={169--200},
   issn={0003-486X},
   review={\MR{644018 (83e:28012)}},
   doi={10.2307/1971341},
}

\bib{PH}{book}{
   author={Penner, R. C.},
   author={Harer, J. L.},
   title={Combinatorics of train tracks},
   series={Annals of Mathematics Studies},
   volume={125},
   publisher={Princeton University Press},
   place={Princeton, NJ},
   date={1992},
   pages={xii+216},
   isbn={0-691-08764-4},
   isbn={0-691-02531-2},
   review={\MR{1144770 (94b:57018)}},
}

\bib{R}{article}{
   author={Rafi, Kasra},
   title={A characterization of short curves of a Teichm\"uller geodesic},
   journal={Geom. Topol.},
   volume={9},
   date={2005},
   pages={179--202},
   issn={1465-3060},
   review={\MR{2115672 (2005i:30072)}},
   doi={10.2140/gt.2005.9.179},
}

\bib{S}{book}{
   author={Strebel, Kurt},
   title={Quadratic differentials},
   series={Ergebnisse der Mathematik und ihrer Grenzgebiete (3) [Results in
   Mathematics and Related Areas (3)]},
   volume={5},
   publisher={Springer-Verlag},
   place={Berlin},
   date={1984},
   pages={xii+184},
   isbn={3-540-13035-7},
   review={\MR{743423 (86a:30072)}},
}

\bib{T}{article}{
   author={Thurston, William P.},
   title={The Geometry and Topology of Three-manifolds},
   journal={Princeton Lecture Notes},
   date={1980},
}

\bib{Ve}{article}{
   author={Veech, William A.},
   title={The Teichm\"uller geodesic flow},
   journal={Ann. of Math. (2)},
   volume={124},
   date={1986},
   number={3},
   pages={441--530},
   issn={0003-486X},
   review={\MR{866707 (88g:58153)}},
   doi={10.2307/2007091},
}

\bib{Vo}{article}{
   author={Vorobets, Yaroslav},
   title={Periodic geodesics on generic translation surfaces},
   conference={
      title={Algebraic and topological dynamics},
   },
   book={
      series={Contemp. Math.},
      volume={385},
      publisher={Amer. Math. Soc.},
      place={Providence, RI},
   },
   date={2005},
   pages={205--258},
   review={\MR{2180238 (2007d:37048)}},
   doi={10.1090/conm/385/07199},
}

\bib{Z}{article}{
   author={Zorich, Anton},
   title={Flat surfaces},
   conference={
      title={Frontiers in number theory, physics, and geometry. I},
   },
   book={
      publisher={Springer},
      place={Berlin},
   },
   date={2006},
   pages={437--583},
   review={\MR{2261104 (2007i:37070)}},
   doi={10.1007/978-3-540-31347-2-13},
}

\end{biblist}
\end{bibdiv}

\end{document}